\documentclass[11pt,reqno]{amsart}
\usepackage{color}

\usepackage{amsmath}
\usepackage{amsfonts,amscd}
\usepackage{amssymb}
\usepackage{url}
\usepackage{hyperref}

\usepackage[english]{babel}
\usepackage{booktabs}
\usepackage{tikz}

\theoremstyle{plain}
\newtheorem{theorem}                 {Theorem}      [section]

\newtheorem{corollary}    [theorem]  {Corollary}
\newtheorem{lemma}        [theorem]  {Lemma}
\newtheorem{proposition}  [theorem]  {Proposition}

\theoremstyle{definition}

\newtheorem{definition}   [theorem]  {Definition}

\newtheorem{example}      [theorem]  {Example}

\numberwithin{equation}{section}

\def \cn{{\mathbb C}}
\def \C{{\mathbb C}}
\def \hn{{\mathbb H}}

\def \rn{{\mathbb R}}
\def \R{{\mathbb R}}

\def \B{\mathcal B}
\def \E{\mathcal E}

\def\nab#1#2{\hbox{$\nabla$\kern -.3em\lower 1.0 ex
		\hbox{$#1$}\kern -.1 em {$#2$}}}
\def\hatnab#1#2{\hbox{$\nabla$\kern -.3em\lower 1.0 ex
		\hbox{$#1$}\kern -.1 em {$#2$}}}

\def \Re{\mathfrak R\mathfrak e}
\def \Im{\mathfrak I\mathfrak m}
\def \inv {^{-1}}

\def \Sym{\mathrm{Sym}}

\def \g{\mathfrak{g}}

\def \k{\mathfrak{k}}

\def \p{\mathfrak{p}}

\def\span{\mathrm{span}}

\def \GLR#1{\mathbf{GL}_{#1}(\rn)}

\def \glr#1{\mathfrak{gl}_{#1}(\rn)}
\def \GLC#1{\mathbf{GL}_{#1}(\cn)}
\def \glc#1{\mathfrak{gl}_{#1}(\cn)}

\def \SLR#1{\mathbf{SL}_{#1}(\rn)}
\def \SL2{\widetilde{\text{\bf SL}}_{2}(\rn)}
\def \slr#1{\mathfrak{sl}_{#1}(\rn)}
\def \SLC#1{\mathbf{SL}_{#1}(\cn)}
\def \slc#1{\mathfrak{sl}_{#1}(\cn)}

\def \SO#1{\mathbf{SO}(#1)}
\def \so#1{\mathfrak{so}(#1)}
\def \SOs#1{\mathbf{SO}^*(#1)}
\def \sos#1{\mathfrak{so}^*(#1)}

\def \SOC#1{\text{\bf SO}(#1,\cn)}

\def \soC#1{\mathfrak{so}(#1,\cn)}

\def \sus#1{\mathfrak{su}^*(#1)}

\def \U#1{\text{\bf U}(#1)}
\def \u#1{\mathfrak{u}(#1)}
\def \Us#1{\text{\bf U}^*(#1)}
\def \us#1{\mathfrak{u}^*(#1)}

\def \SU#1{\text{\bf SU}(#1)}

\def \SUs#1{\text{\bf SU}^*(#1)}
\def \sus#1{\mathfrak{su}^*(#1)}

\def \Sp#1{\text{\bf Sp}(#1)}
\def \sp#1{\mathfrak{sp}(#1)}

\def \SpR#1{\text{\bf Sp}(#1,\rn)}
\def \spR#1{\mathfrak{sp}(#1,\rn)}

\def \spC#1{\mathfrak{sp}(#1,\cn)}

\DeclareMathOperator{\Div}{div} 

\DeclareMathOperator{\trace}{trace}

\numberwithin{equation}{section}
\allowdisplaybreaks

\begin{document}

\subjclass[2020]{53C35, 53C43, 58E20}
	
\keywords{minimal submanifolds, eigenfunctions, symmetric spaces}

\author{Sigmundur Gudmundsson}
\address{Mathematics, Faculty of Science\\
Lund University\\
Box 118, Lund 221 00\\
Sweden}
\email{Sigmundur.Gudmundsson@math.lu.se}

\author{Lucas Larsen}
\address{Mathematics, Faculty of Science\\
	Lund University\\
	Box 118, Lund 221 00\\
	Sweden}
\email{Lukas.Larsen@math.ku.dk}

\title
[Complete Minimal Submanifolds of Symmetric Spaces]
{Complete Minimal Submanifolds of the Non-Compact Riemannian Symmetric Spaces\\ 
$\SLR n/\SO n$, $\SpR{n}/\U n$, $\SOs{2n}/\U n$, $\SUs{2n}/\Sp n$\\ 
via Complex-Valued Eigenfunctions}

\begin{abstract}
In this work we construct new multidimensional families of complete minimal submanifolds, of the classical non-compact Riemannian symmetric spaces $\SLR n/\SO n$, $\SpR{n}/\U n$, $\SOs{2n}/\U n$ and $\SUs{2n}/\Sp n$, of codimension two.
\end{abstract}
	
%\dedicatory{version 1.020 - \today\ - current editor: SG}

\maketitle

%%%%%%%%%%%%%%%%%%%%%%%%%%%%%%%%%%%%%%%%%%%
\section{Introduction}
\label{section-introduction}
%%%%%%%%%%%%%%%%%%%%%%%%%%%%%%%%%%%%%%%%%%%

The study of minimal submanifolds of a given ambient space plays a central role in differential geometry.  This has a long, interesting history and has attracted the interests of profound mathematicians for many generations.  The famous Weierstrass-Enneper representation formula, for minimal surfaces in three-dimensional Euclidean space, brings {\it complex analysis} into play as a useful tool for the study of these beautiful objects.

This was later generalised to the study of minimal surfaces in much more general ambient manifolds via {\it harmonic conformal immersions}.  The next  result follows from the seminal paper \cite{Eel-Sam} of Eells and Sampson from 1964.  For this see also Proposition 3.5.1 of \cite{Bai-Woo-book}.

\begin{theorem}
Let $\phi:(M^m,g)\to (N,h)$ be a smooth conformal map between Riemannian manifolds.  If $m=2$ then $\phi$ is harmonic if and only if the image is minimal in $(N,h)$.
\end{theorem}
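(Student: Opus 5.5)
The plan is to compare the tension field of $\phi$ with the mean curvature of its image, using the composition formula for second fundamental forms. Throughout I assume $\phi$ is a conformal immersion, i.e.\ $\phi^*h=\lambda^2 g$ with $\lambda>0$, and interpret ``the image is minimal'' locally, where $\phi$ is an embedding. At points where $\lambda=0$ the image is not a submanifold, so these branch points are excluded from the minimality statement.

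First, set $\bar g=\phi^*h=\lambda^2 g$. Then $\phi:(M,\bar g)\to (N,h)$ is an isometric immersion. For an isometric immersion the second fundamental form $\nabla d\phi$ is the second fundamental form $B$ of the image, which is normal-valued. Hence the tension field is
\begin{equation*}
\tau_{\bar g}(\phi)=\trace_{\bar g}\nabla d\phi = m\,H,
\end{equation*}
where $H$ is the mean curvature vector of $\phi(M)$. So, with respect to the induced metric, $\phi$ is harmonic if and only if the image is minimal. This holds in every dimension $m$.

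Second, I relate $\tau_g(\phi)$ to $\tau_{\bar g}(\phi)$ under the conformal change $\bar g=\lambda^2 g$. Let $\lambda=e^{f}$. The Levi-Civita connections are related by
\begin{equation*}
\bar\nabla_XY=\nabla_XY+X(f)Y+Y(f)X-g(X,Y)\,\mathrm{grad}_g f .
\end{equation*}
Using this in $\nabla d\phi(X,Y)=\nabla^\phi_X d\phi(Y)-d\phi(\nabla_XY)$ and tracing over a $g$-orthonormal frame gives
\begin{equation*}
\tau_{\bar g}(\phi)=\lambda^{-2}\bigl(\tau_g(\phi)+(m-2)\,d\phi(\mathrm{grad}_g f)\bigr).
\end{equation*}
For $m=2$ the correction term vanishes, so $\tau_g(\phi)=\lambda^2\tau_{\bar g}(\phi)=2\lambda^2H$. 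This is exactly the two-dimensional conformal invariance of the energy, and the equivalence follows.

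The main care point is the trace computation in the conformal change formula. One must check that the terms $X(f)Y+Y(f)X$ contribute $2\,d\phi(\mathrm{grad} f)$ after tracing, while the $-g(X,Y)\mathrm{grad} f$ term contributes $-m\,d\phi(\mathrm{grad} f)$. Their sum gives the coefficient $(2-m)$ on the connection side, which appears as $(m-2)$ in the formula above. One could instead cite the conformal invariance of the Dirichlet energy in dimension two and deduce that the Euler--Lagrange equations coincide up to the factor $\lambda^2$. I would still do the direct computation, which also justifies the normality of $\nabla d\phi$ for isometric immersions.
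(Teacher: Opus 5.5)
Your argument is correct and is the standard one. The paper gives no proof of its own: it defers to Eells--Sampson and Proposition 3.5.1 of Baird--Wood, whose content is precisely the identity $\tau_g(\phi)=m\lambda^2H-(m-2)\,d\phi(\mathrm{grad}_g\ln\lambda)$ for a conformal immersion, which you derive via the conformal change $\bar g=\lambda^2 g$ and then specialise to $m=2$ with $\lambda>0$.
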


This result has turned out to be very useful in the construction of minimal surfaces in Riemannian symmetric spaces of various types.  For this we refer to \cite{Cal},
\cite{Eel-Woo}, \cite{Uhl}, \cite{Bur-Raw} and \cite{Bur-Gue}, just to name a few.
\smallskip

In their work \cite{Bai-Eel} from 1981, Baird and Eells have shown that complex-valued harmonic morphisms from Riemannian manifolds are useful tools for the study of minimal submanifolds of codimension two. 

\begin{theorem}\cite{Bai-Eel}
\label{theorem:Bai-Eel-special}
Let $\phi:(M,g)\to\cn$ be a complex-valued harmonic morphism from a Riemannian manifold.  Then every regular fibre of $\phi$ is a minimal submanifold of $(M,g)$ of codimension {\it two}.
\end{theorem}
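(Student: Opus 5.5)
Since $\phi$ is a harmonic morphism, the Fuglede--Ishihara characterisation says it is harmonic and horizontally (weakly) conformal. Concretely there is a function $\lambda\ge 0$ with
\[
\tau(\phi)=\Delta\phi=0,\qquad g(\nabla\phi,\nabla\phi)=0,\qquad |\nabla \phi|^2=2\lambda^2 ,
\]
where the inner product is extended complex-bilinearly. Writing $\phi=u+iv$, the conformality condition is equivalent to $|\nabla u|=|\nabla v|=\lambda$ and $g(\nabla u,\nabla v)=0$. The plan is to show that each regular fibre $F=\phi^{-1}(z)$ has vanishing mean curvature by computing the trace of its second fundamental form, using the gradients of $u$ and $v$ as a normal frame.

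At a regular point $d\phi\neq 0$, so $\lambda>0$ there. The normal bundle of $F$ is the horizontal space $\mathcal H=\span\{\nabla u,\nabla v\}$, and
\[
X=\nabla u/\lambda,\qquad Y=\nabla v/\lambda
\]
form an orthonormal normal frame along $F$. Choose a local orthonormal frame $\{e_1,\dots,e_{m-2}\}$ of the vertical space $\mathcal V=TF$. The mean curvature vector, up to the factor $1/(m-2)$, is then
\[
\sum_k \bigl(g(\nabla_{e_k}e_k,X)X+g(\nabla_{e_k}e_k,Y)Y\bigr).
\]
Since $e_k(u)=0$ along $F$, we have $g(\nabla_{e_k}e_k,\nabla u)=-\,\mathrm{Hess}\,u(e_k,e_k)$. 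So it suffices to prove that the vertical trace $\sum_k \mathrm{Hess}\,u(e_k,e_k)$ vanishes, and likewise for $v$.

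The key step, and the main point of care, is the horizontal part of the Hessian trace. Harmonicity gives
\[
0=\Delta u=\sum_k \mathrm{Hess}\,u(e_k,e_k)+\mathrm{Hess}\,u(X,X)+\mathrm{Hess}\,u(Y,Y),
\]
so I must show that $\mathrm{Hess}\,u(X,X)+\mathrm{Hess}\,u(Y,Y)=0$. For this I use $\mathrm{Hess}\,u(W,\nabla u)=\tfrac12 W(|\nabla u|^2)=\tfrac12 W(\lambda^2)$, which gives $\mathrm{Hess}\,u(X,X)=\tfrac{1}{2\lambda^2}\,X(\lambda^2)$. Similarly, differentiating $g(\nabla u,\nabla v)=0$ in the direction $Y$ gives
\[
\mathrm{Hess}\,u(Y,\nabla v)=-\mathrm{Hess}\,v(Y,\nabla u)=-\mathrm{Hess}\,v(\nabla v,\nabla u)/\lambda=-\tfrac{1}{2\lambda}\,X(\lambda^2),
\]
where the last equality uses $|\nabla v|^2=\lambda^2$. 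Hence $\mathrm{Hess}\,u(Y,Y)=-\tfrac{1}{2\lambda^2}X(\lambda^2)$, and the two terms cancel. The symmetric argument, with the roles of $u$ and $v$ exchanged, handles $v$.

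It follows that $\sum_k\mathrm{Hess}\,u(e_k,e_k)=\sum_k\mathrm{Hess}\,v(e_k,e_k)=0$, so the mean curvature of $F$ vanishes and $F$ is minimal of codimension two. The only real obstacle is the sign and bookkeeping in the cancellation of the horizontal Hessian terms. Horizontal conformality is used exactly there, through the equal lengths and the orthogonality of the two gradients.
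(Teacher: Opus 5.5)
Your argument is correct. The paper gives no proof of its own here, since the statement is simply quoted from Baird--Eells. The relevant comparison is therefore with their argument, which rests on the fundamental equation for a horizontally weakly conformal map $\phi:(M^m,g)\to(N^n,h)$ with dilation $\lambda$:
\[
\tau(\phi)=-(n-2)\,d\phi(\mathrm{grad}\ln\lambda)-(m-n)\,d\phi(\mu^{\mathcal V}),
\]
where $\mu^{\mathcal V}$ is the mean curvature vector of the fibres at regular points. For $n=2$ the first term disappears. Since $d\phi$ is injective on horizontal vectors where $\lambda\neq 0$, harmonicity is then equivalent to $\mu^{\mathcal V}=0$.

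Your computation is exactly this special case, carried out by hand in terms of $u$ and $v$. The cancellation $\mathrm{Hess}\,u(X,X)+\mathrm{Hess}\,u(Y,Y)=0$ is the vanishing of the $(n-2)$-term, and it uses only horizontal conformality, not $\Delta u=0$. So you have in fact shown $\tau(\phi)=-(m-2)\,d\phi(H)$ on the regular set. This also gives the converse: a horizontally conformal submersion onto a surface is harmonic if and only if its fibres are minimal. The general fundamental equation buys the picture for every target dimension; your version is shorter and self-contained for $n=2$.

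There is one bookkeeping slip to fix. With $X=\nabla u/\lambda$ you get
\[
\mathrm{Hess}\,u(X,X)=\lambda^{-2}\,\mathrm{Hess}\,u(\nabla u,\nabla u)=\tfrac{1}{2\lambda^{2}}\,(\nabla u)(\lambda^2)=\tfrac{1}{2\lambda}\,X(\lambda^2)=X(\lambda),
\]
not $\tfrac{1}{2\lambda^2}X(\lambda^2)$. Likewise $\mathrm{Hess}\,u(Y,\nabla v)=-\tfrac{1}{2\lambda}(\nabla u)(\lambda^2)=-\tfrac12 X(\lambda^2)$, hence $\mathrm{Hess}\,u(Y,Y)=-X(\lambda)$. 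You wrote $X$ where $\nabla u=\lambda X$ was meant. The same factor $\lambda$ is lost in both terms, so the cancellation, and hence the proof, is unaffected.
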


This can be seen as dual to the above-mentioned generalisation of the Weierstrass-Enneper representation. Harmonic morphisms are the much studied {\it horizontally conformal harmonic maps}.  For an introduction to the general theory we recommend the book \cite{Bai-Woo-book}, by Baird and Wood, and the regularly updated online bibliography \cite{Gud-bib}.

\section{The Main Results}

The recent work \cite{Gud-Mun-1} introduces a method for constructing minimal submanifolds of Riemannian manifolds  via {\it submersions}, see Theorem \ref{theorem-Gud-Mun-1}.  Then this scheme is employed to  provide {\it compact} examples in several important cases. The main ingredients for this new procedure are the so called {\it complex-valued eigenfunctions} on the Riemannian ambient space.  These are functions which are eigen both with respect to the classical {\it Laplace-Beltrami} and the so called {\it conformality} operator, see Section \ref{section-eigenfunctions}. In the recent study \cite{Geg-Gud-1} the authors continue the investigation and apply the above-mentioned method to the classical compact Riemannian symmetric spaces 
$$
\SU n/\SO n,\ \Sp{n}/\U n,\ \SO{2n}/\U n,\ \SU{2n}/\Sp n.
$$
%\smallskip 

In the current work we are concerned with a similar study in the classical non-compact dual Riemannian symmetric spaces 
$$
\SLR n/\SO n,\ \ \SpR{n}/\U n,\ \ \SOs{2n}/\U n,\ \  \SUs{2n}/\Sp n
.$$
For the first case we construct a real $(2n-2)$-dimensional family of {\it complete} minimal submanifolds of $\SLR n/\SO n$ of codimension two.

\begin{theorem}\label{theorem-SLRSO}
Let $n\geq 3$ and $a=a_1+ia_2\in\C^{n}$ be such that $a_1,a_2\in\R^{n}$ are linearly independent. Further let  $\phi:\SLR n/\SO n \to \C$ be the complex-valued eigenfunction defined by
$$
\phi(x\cdot \SO n) = \trace(aa^txx^t).
$$
Then the inverse image $\phi^{-1}(\{0\})$ is a complete minimal submanifold of $\SLR n/\SO n $ of codimension two.
\end{theorem}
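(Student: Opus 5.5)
The plan is to use the method of \cite{Gud-Mun-1} (Theorem \ref{theorem-Gud-Mun-1}). For a complex-valued $(\lambda,\mu)$-eigenfunction $\phi$ on a Riemannian manifold, the regular part of each fibre $\phi^{-1}(\{c\})$ with $c\in\{0\}\cup\{\ldots\}$ (in particular $c=0$) is a minimal submanifold of codimension two. This holds whenever $\tau(\phi)=\lambda\phi$ and $\kappa(\phi,\phi)=\mu\phi^2$.

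The proof therefore has three steps.
\begin{enumerate}
\item Verify that $\phi(x\cdot\SO n)=\trace(aa^txx^t)$ is a well-defined eigenfunction on $\SLR n/\SO n$. It is well defined because $xx^t$ is invariant under $x\mapsto xk$ for $k\in\SO n$.
\item Show that $0$ is a regular value, i.e.\ $d\phi$ has real rank two at every point of $\phi^{-1}(\{0\})$.
\item Show that $\phi^{-1}(\{0\})$ is a closed subset, hence a complete submanifold.
\end{enumerate}

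For the first step I would pull $\phi$ back to $\SLR n$ and compute the tension and conformality operators on the symmetric space. I would use the standard orthonormal basis of $\p$, the symmetric traceless matrices. Writing $\hat\phi(x)=a^txx^ta$, one gets $X(\hat\phi)=2a^txXx^ta$ for $X\in\p$ (left-invariant fields along $x$). Summing over the basis and using the formulae in Section \ref{section-eigenfunctions} of type $\sum_X X^2$ and $\sum_X (a^t X b)(c^t X d)$, I expect
\[
\tau(\phi)=\lambda\,\phi,\qquad \kappa(\phi,\phi)=\mu\,\phi^2,
\]
with constants $\lambda,\mu$ depending only on $n$. The terms involving $a^ta$ are harmless, since they appear multiplied by $\phi$ itself or vanish on the zero set. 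Strictly speaking, Theorem \ref{theorem-Gud-Mun-1} only needs the identities on $\phi^{-1}(\{0\})$ if we use the version for level sets. I would instead aim for the genuine eigenfunction identities, as asserted in the statement, and presumably these are recorded in the paper's preliminaries for $\SLR n/\SO n$.

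For regularity, set $y=x^ta\in\C^n$, so that $\phi=y^ty$ and $d\phi(X)=2y^tXy$ for symmetric traceless $X$. The real rank of $d\phi$ is two unless all values $y^tXy$ are real multiples of a single complex number. Since $x$ is invertible and $a_1,a_2$ are linearly independent, $y_1=x^ta_1$ and $y_2=x^ta_2$ are linearly independent real vectors. On the zero set we have $|y_1|=|y_2|$ and $y_1\perp y_2$. Then I can choose traceless symmetric $X$ to realise independent values:
\begin{itemize}
\item $X=y_1y_1^t-y_2y_2^t$ gives a nonzero real value, proportional to $|y_1|^4$;
\item $X=y_1y_2^t+y_2y_1^t$ gives a nonzero imaginary value.
\end{itemize}
Both choices are traceless exactly because $|y_1|=|y_2|$ and $y_1\perp y_2$, so they are admissible. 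This needs $n\ge 2$; the hypothesis $n\geq 3$ is probably needed only for the eigenfunction constants or for non-emptiness.

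Completeness follows because $\phi$ is continuous, so its zero set is closed in the complete manifold $\SLR n/\SO n$. A closed embedded submanifold of a complete Riemannian manifold is complete. Non-emptiness is immediate: choose $x$ with $x^ta_1\perp x^ta_2$ of equal length, which is possible by $\GLR n$-transitivity on pairs of independent vectors after normalising the determinant.

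I expect the main obstacle to be the eigenfunction computation, namely getting the correct constants from the trace identities over an orthonormal basis of $\p$. The regularity argument is the conceptual point where linear independence of $a_1$ and $a_2$ enters.
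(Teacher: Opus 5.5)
Your proposal is correct. Its skeleton matches the paper's: the eigenfunction property, a point $x\in\SLR n$ with $x^ta=e_1+ie_2$ for non-emptiness, regularity, and then Theorem \ref{theorem-Gud-Mun-1}. In the paper, $n\geq 3$ is used only in the non-emptiness step, as a free column for normalising the determinant.

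The regularity step, however, is argued differently. The paper uses Lemma \ref{lemma-regular-orthogonal}: $d\phi$ vanishes at $x$ if and only if $x^taa^tx\in(\slc n)^{\perp}=\C I_n$, which is impossible because the left-hand side has rank one. Strictly, this only gives $d\phi\neq 0$. Real rank two on the fibre then needs an observation the paper leaves implicit: $\kappa(\phi,\phi)=\mu\phi^2=0$ there, so the gradients of the real and imaginary parts of $\phi$ are orthogonal and of equal length. Your directions $y_1y_1^t-y_2y_2^t$ and $y_1y_2^t+y_2y_1^t$ are traceless precisely because $y=x^ta$ is isotropic. They give $d\phi$ the values $4|y_1|^4$ and $4i|y_1|^4$, so you get surjectivity of $d\phi$ directly, without invoking the eigenfunction property. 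What the paper's lemma buys is uniformity: the same reduction to $(\g^{\C})^{\perp}$ is reused for $\SpR n/\U n$, $\SOs{2n}/\U n$ and $\SUs{2n}/\Sp n$.

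For the eigenfunction property, the paper argues by duality with $\SU n/\SO n$ via Theorem \ref{theorem-duality}. Your direct computation also works, and it produces no stray $a^ta$ terms. One has $X^2(\hat\phi)=4y^tX^2y$, $\sum_{X\in\B_{\p}}X^2=\frac{n^2+n-2}{2n}I_n$ and $\sum_{X\in\B_{\p}}(y^tXy)^2=\frac{n-1}{n}(y^ty)^2$. This gives $\lambda=\frac{2(n^2+n-2)}{n}$ and $\mu=\frac{4(n-1)}{n}$ for every $a\neq 0$, not just on the zero set.

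Finally, your closedness argument for completeness makes explicit a point the paper leaves unstated.
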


For the Riemannian symmetric space $\SpR{n}/\U{n}$ we construct a real $(4n-2)$-dimensional family of {\it complete} minimal submanifolds of codimension two. 

\begin{theorem}\label{theorem-SpRU}
Let $n\geq 2$ and $a=a_1+ia_2\in\C^{2n}$ be such that $a_1,a_2\in\R^{2n}$ are linearly independent. Further let $\phi:\SpR n/\U n \to \C$ be the complex-valued map defined by
$$
\phi(x\cdot\U n) = \trace(aa^txx^t).
$$
Then the inverse image $\phi^{-1}(\{0\})$ is a complete minimal submanifold of $\SpR n/\U n)$ of codimension two.
\end{theorem}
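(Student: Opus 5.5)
We follow the same scheme as for Theorem \ref{theorem-SLRSO}. The central tool is Theorem \ref{theorem-Gud-Mun-1} of \cite{Gud-Mun-1}: if $\phi:(M,g)\to\C$ is a complex-valued eigenfunction, i.e.\ $\tau(\phi)=\lambda\,\phi$ and $\kappa(\phi,\phi)=\mu\,\phi^2$ for constants $\lambda,\mu\in\C$, then every regular fibre $\phi^{-1}(\{0\})$ is minimal of codimension two. The proof therefore splits into three steps.
\begin{enumerate}
\item Show that $\phi(x\cdot\U n)=\trace(aa^txx^t)$ is an eigenfunction on $\SpR n/\U n$.
\item Show that $0$ is a regular value.
\item Show that the fibre is complete.
\end{enumerate}

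\emph{Step 1: eigenfunction property.} We lift $\phi$ to $\hat\phi(x)=\trace(aa^txx^t)$ on $\SpR n$. Since $\U n=\SpR n\cap\SO{2n}$, we have $xkk^tx^t=xx^t$, so $\hat\phi$ is right-invariant under $\U n$ and descends to the quotient. The Laplacian and conformality operator on the quotient are then computed from left-invariant vector fields $X\in\p$. Here $\p$ is the space of symmetric matrices in $\spR n$, with an orthonormal basis built from $E_{ij}$-type matrices satisfying $X^tJ+JX=0$.

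For $X\in\p$ we have $X(\hat\phi)=2\trace(aa^t xXx^t)$ and $X^2(\hat\phi)=2\trace(aa^tx(X^2+XX^t)x^t)$, which equals $4\trace(aa^txX^2x^t)$ because $X$ is symmetric. Summing over the basis, we need the standard identities for $\sum_{X\in\p}X^2$ and $\sum_{X\in\p}XAX^t$ in terms of $A$, $A^t$, $JAJ^t$ and $\trace A$, as in \cite{Geg-Gud-1}. These give
$$
\tau(\hat\phi)=\lambda\,\hat\phi,\qquad \kappa(\hat\phi,\hat\phi)=\mu\,\hat\phi^2+c\,\trace(aa^tJaa^tJ^t)\cdots .
$$
The point is that $a^tJa=0$ automatically, since $J$ is skew. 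Also $a^ta$ enters only through $\trace(aa^t\,\cdot)$-factorisations, so the remaining terms reduce to multiples of $(a^txx^ta)^2=\hat\phi^2$. This yields constants $\lambda,\mu$ depending only on $n$.

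\emph{Step 2: regular value.} Suppose $x$ lies in the fibre and $d\phi_x=0$. Then $\trace(aa^txXx^t)=0$ for all $X\in\p$, i.e.\ $x^taa^tx$ is orthogonal to $\p$. With $b=x^ta$, this means the symmetric matrix $bb^t+Jbb^tJ^t$, which is the $\p$-projection of $bb^t$ up to a constant, vanishes. Pairing this matrix with $\bar b\bar b^t$ gives
$$
|b^t\bar b|^2+|b^tJ\bar b|^2=0 .
$$
Since $b\neq0$, we must examine whether this is consistent. The condition $b^t\bar b=|b_1|^2+|b_2|^2$, where $b=b_1+ib_2$, is strictly positive because $b\neq0$. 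This is a contradiction, so $0$ is a regular value. Here $b\neq 0$ uses the linear independence of $a_1,a_2$, which ensures $a\neq0$, together with the invertibility of $x$. The independence is also needed for the fibre to be nonempty: one needs some $x$ with $b^tb=0$, i.e.\ $|b_1|=|b_2|$ and $b_1\perp b_2$. Such an $x$ exists because $\SpR n$ acts transitively on pairs of vectors with a prescribed symplectic pairing, so we can map $(a_1,a_2)$ to an orthogonal pair of equal length.

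\emph{Step 3: completeness.} The fibre is a closed subset of the complete manifold $\SpR n/\U n$, since it is the preimage of a point under a continuous map. It is an embedded submanifold by Step 2. A properly embedded closed submanifold with the induced metric is complete, since Cauchy sequences converge in the ambient space and the limit lies in the fibre.

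\emph{Main obstacle.} The main obstacle is Step 1. The $\p$-sums for $\spR n$ produce extra $J$-twisted terms, and one must verify that these are again proportional to $\hat\phi$ and $\hat\phi^2$ rather than to independent invariants such as $\trace(aa^t)\cdot(\ldots)$. If a term like $a^ta$ survives, I would first try to show that its coefficient cancels between the $XAX^t$ and $X^2$ contributions.
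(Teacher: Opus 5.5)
Your proof is correct and has the same skeleton as the paper's: eigenfunction, nonempty regular fibre, then Theorem~\ref{theorem-Gud-Mun-1}. The two substantive steps, however, are done differently.

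For nonemptiness, the paper's Lemma~\ref{lemma-spr-zero} builds $x$ by hand from the elements $\left(\begin{smallmatrix} I_n & s\\ 0 & I_n\end{smallmatrix}\right)$ and $\mathrm{diag}(d^{-1},d^t)$ of $\SpR n$, with a case distinction. You instead use that $\SpR n$, which is closed under transposition, acts transitively on linearly independent pairs with a prescribed value $c=a_1^tJ_na_2$ of the symplectic form (Witt's theorem). This is shorter and shows conceptually where linear independence and $n\geq 2$ enter. You should, however, name the target pair: take $(e_1,e_2)$ if $c=0$ (this is where $n\ge 2$ is needed), and $(\sqrt{|c|}\,e_1,\pm\sqrt{|c|}\,e_{n+1})$, with the sign of $c$, otherwise.

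For regularity, the paper applies Lemma~\ref{lemma-regular-orthogonal} on the whole group. It puts $bb^t\in(\spC n)^\perp$ into the block form $\left(\begin{smallmatrix}U&V\\-V&U\end{smallmatrix}\right)$ and uses that a skew-symmetric matrix of rank at most one vanishes. You work with $\p$ on the quotient and close with a single positivity identity. Both arguments actually show that $\hat\phi$ has no critical points at all. The paper's route has the advantage of reusing one lemma for all four spaces. Your Step~3 also supplies the completeness argument that the paper leaves implicit.

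Two remarks. First, there is a sign slip in Step~2. $\p$ consists of the symmetric matrices anticommuting with $J_n$, so the $\p$-projection of $Z$ is $\tfrac12(Z+J_nZJ_n)=\tfrac12(Z-J_nZJ_n^t)$. The critical-point condition is therefore $bb^t-J_nbb^tJ_n^t=0$, not $bb^t+J_nbb^tJ_n^t=0$. With the matrix you wrote, pairing with $\bar b\bar b^t$ would give $|b^t\bar b|^2-|b^tJ_n\bar b|^2$, which does vanish, e.g. for $b=e_1+ie_{n+1}$. The identity you displayed, with the plus sign, is the one coming from the correct projection, so your conclusion stands.

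Second, Step~1 is not really an obstacle. The paper simply quotes Proposition~\ref{proposition-eigenfunctions-spr}, obtained by duality with $\Sp n/\U n$, and the direct computation also closes at once. With $w=x^ta$:
\begin{itemize}
\item $\tau(\hat\phi)=4\sum_{X\in\B_{\p}}w^tX^2w$, and $\sum_{X\in\B_{\p}}X^2=\tfrac{n+1}{2}I_{2n}$;
\item $\kappa(\hat\phi,\hat\phi)=4\sum_{X\in\B_{\p}}(w^tXw)^2=2\bigl((w^tw)^2+(w^tJ_nw)^2\bigr)$, and $w^tJ_nw=0$ identically.
\end{itemize}
No $\sum XAX^t$ or trace terms arise, and you get $\lambda=2(n+1)$ and $\mu=2$.
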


In the third case we construct a real $(8n-10)$-dimensional family of {\it complete} minimal submanifolds of $\SOs{2n}/\U n$ of codimension two.

\begin{theorem}\label{theorem-SOsU}
Let $n\geq 2$ and $a,b\in\C^{2n}$ be linearly independent vectors such that 
$$
(a,a) = (a,b) = (J_na, b) = 0 \neq (b,b)
$$ 
Further let the complex-valued function $\phi_{a,b}:\SOs{2n}/\U n \to \C$ be defined by
$$
\phi_{a,b}(z\cdot\U n) = \trace(ab^tzJ_nz^t).
$$
Then the inverse image $\phi_{a,b}^{-1}(\{0\})$ is a complete minimal submanifold of $\SOs{2n}/\U n$ of codimension two.
\end{theorem}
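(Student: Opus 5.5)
The plan is to apply the general scheme of \cite{Gud-Mun-1} (Theorem \ref{theorem-Gud-Mun-1}). It states that if $\phi:(M,g)\to\C$ is a complex-valued eigenfunction, so that $\tau(\phi)=\lambda\phi$ and $\kappa(\phi,\phi)=\mu\phi^2$, then every regular fibre $\phi^{-1}(\{0\})$ is a minimal submanifold of codimension two. The fibre over $0$ is special because the vanishing of $\phi$ kills both the tension and the conformality defects there. So the argument has three steps: show that $\phi_{a,b}$ is an eigenfunction, show that $0$ is a regular value, and show that the fibre is complete.

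\textbf{Step 1: eigenfunction property.} I would lift $\phi_{a,b}$ to the group $\SOs{2n}$ via $\hat\phi(z)=\trace(ab^tzJ_nz^t)$, which is right $\U n$-invariant. Then I would use the standard formulas for $\tau$ and $\kappa$ of matrix-coefficient functions on the non-compact dual. For the vector fields coming from an orthonormal basis of $\p$, one has
$$
\tau(\hat\phi)=\sum_{X\in\p}\hat\phi_{XX},\qquad \kappa(\hat\phi,\hat\phi)=\sum_{X\in\p}\hat\phi_X^2.
$$
Writing $X(z)=zX$, the quantities $\hat\phi_X$ and $\hat\phi_{XX}$ become traces involving $zXJ_nz^t$ and $zX^2J_nz^t$, $zXJ_nX^tz^t$. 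The sums over an orthonormal basis of $\p\subset\sos{2n}$ can then be evaluated with the usual "completeness" identities, the non-compact analogues of $\sum_X X A X$ and $\sum_X \trace(AX)\trace(BX)$ used in \cite{Geg-Gud-1} for $\SO{2n}/\U n$, with signs changed by duality. I expect
$$
\tau(\phi)=\lambda\phi+(\text{terms in }(a,b),(b,a))
$$
and
$$
\kappa(\phi,\phi)=\mu\phi^2+(\text{terms in }(a,a)(b,\cdot),\ (J_na,b),\dots).
$$
The assumptions $(a,a)=(a,b)=(J_na,b)=0$ are exactly what kill the extra terms. The condition $(b,b)\neq0$ is not needed here; its role comes in Step 2. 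Alternatively, one can import the compact computation of \cite{Geg-Gud-1} directly, since the identities there are polynomial in $z$ and transfer to the dual by complexification. I would mention this as a shortcut, but I would verify the sign of $\lambda,\mu$ directly.

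\textbf{Step 2: regularity of $0$.} I must show that $d\phi$ has real rank $2$ at every point of $\phi^{-1}(\{0\})$. Equivalently, $\phi$ is not locally conformal-and-degenerate: $\kappa(\phi,\phi)=\mu\phi^2=0$ on the fibre, so at a point of the fibre $d\phi$ is either zero or of full rank $2$, because a nonzero isotropic complex covector has independent real and imaginary parts. It therefore suffices to show that $d\phi\neq0$, i.e.\ some $\hat\phi_X\ne 0$ wherever $\hat\phi=0$. I would argue by contradiction. If all $\hat\phi_X$ vanish, then, since the left-invariant fields span and $\phi$ also transforms nicely under left translations by $\SOs{2n}$, the function vanishes identically near $z$. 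This would force $ab^t$ paired against the whole orbit $\{zJ_nz^t\}$ to be degenerate, and the orbit spans enough of the skew matrices to make this impossible. Here $(b,b)\ne0$ together with the linear independence of $a,b$ should produce a direction $X$ with $\trace(ab^tzXJ_nz^t)\ne0$. I expect this to be the main obstacle: choosing the test direction $X$ explicitly, perhaps of the form $X=z^{-1}(\text{rank-two skew matrix built from }a,b)z$ projected to $\p$, and handling the projection to $\p$ versus $\k$.

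\textbf{Step 3: completeness.} The fibre is $\phi^{-1}(\{0\})$, the zero set of a continuous function, so it is closed in $\SOs{2n}/\U n$. A closed embedded submanifold of a complete Riemannian manifold is complete with the induced metric. The ambient symmetric space is complete, so the fibre is complete. Non-emptiness would be checked by exhibiting a point, e.g.\ by diagonalising $z$ via the Cartan decomposition and solving one complex equation.
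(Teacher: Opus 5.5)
The genuine gap is Step 2, which is where the actual content of the theorem lies. Your inference that if all $\hat\phi_X$ vanish at $z$, then $\hat\phi$ vanishes identically near $z$, is false. A vanishing differential at one point says nothing about a neighbourhood. Left translation does not rescue it either, since $\hat\phi_{a,b}(gz)=\hat\phi_{g^ta,g^tb}(z)$, so translating changes the function. After that you only hope that $(b,b)\neq0$ and linear independence will produce a good direction $X$. Regularity of $0$, the only place where those two hypotheses enter, is therefore unproved.

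Several parts of your proposal are fine as they stand:
\begin{itemize}
\item Your remark that $\kappa(\phi,\phi)=\mu\phi^2=0$ on the fibre, so that $d\phi\neq0$ already forces real rank two, is correct; the paper leaves it implicit.
\item The closedness argument for completeness is correct.
\item Importing the eigenfunction property from the compact dual is exactly what the paper does.
\end{itemize}
Two smaller corrections. First, $(J_na,b)=0$ plays no role in the eigenfunction property; the condition there is $(a,a)(b,b)=(a,b)^2$. Second, that hypothesis is there for non-emptiness: $\phi(I_{2n}\cdot\U n)=\trace(ab^tJ_n)=(J_na,b)=0$, so no Cartan-decomposition search is needed.

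The missing idea is to differentiate along left-invariant fields of the whole of $\sos{2n}$, not just $\p$. The $\u n$-directions give zero anyway, and including them makes the complex span all of $\soC{2n}$. With $u=z^ta$ and $v=z^tb$ one computes $X(\hat\phi)(z)=\trace\bigl(J_n(uv^t-vu^t)X\bigr)$. Hence $d\hat\phi(z)=0$ if and only if $J_n(uv^t-vu^t)$ is symmetric, i.e.\ if and only if $uv^t-vu^t$ commutes with $J_n$. Since $u,v$ are independent, this forces $\mathrm{span}\{u,v\}$ to be $J_n$-invariant.

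Now use that $z^t\in\SOC{2n}$ and $J_n$ preserve $(\cdot,\cdot)$. This gives $(u,v)=(a,b)=0$ and $(v,v)=(b,b)\neq0$. Skewness gives $(J_nv,v)=0$, and on the fibre $(J_nu,v)=\hat\phi(z)=0$. Write $J_nu=\alpha u+\beta v$ and $J_nv=\gamma u+\delta v$, and pair both with $v$; this gives $\beta=\delta=0$. Hence $-v=J_n^2v=\alpha\gamma u$, contradicting independence.

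The paper runs this mechanism through Lemma~\ref{lemma-regular-orthogonal}. It concludes that the rank-one matrix $J_nz^tab^tz$ is symmetric, so $J_nz^ta=\lambda z^tb$ and $0=(a,a)=\lambda^2(b,b)$. Be aware, however, that with $B=J_n$ the derivative only sees the skew part $\tfrac12(ab^t-ba^t)$ of $ab^t$. So the criterion that is actually justified is the rank-two one above, which still yields the contradiction.
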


For the Riemannian symmetric space $\SUs{2n}/\Sp{n}$ we construct a real $(4n-6)$-dimensional family of {\it complete} minimal submanifolds of codimension two. 

\begin{theorem}\label{theorem-SUsSp}
Let $n\geq 2$ and $a,b\in\C^{2n}$ be linearly independent vectors such that $J_n \bar a$ and $b$ are orthogonal. Let $\phi:\SUs{2n}/\Sp n \to \C$ be the eigenfunction given by
$$
\phi(z\cdot\Sp n) = \trace (ab^tzJ_nz^t).
$$
Then the inverse image $\phi\inv(\{0\})$ is a complete minimal submanifold of $\SUs{2n}/\Sp n$ of codimension two.
\end{theorem}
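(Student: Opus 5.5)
The plan is to apply the submersion method of \cite{Gud-Mun-1} (Theorem \ref{theorem-Gud-Mun-1}), which says the following. If $\phi:(M,g)\to\C$ is a complex-valued eigenfunction with $\tau(\phi)=\lambda\phi$ and $\kappa(\phi,\phi)=\mu\phi^2$, and $0$ is a regular value, then $\phi^{-1}(\{0\})$ is a minimal submanifold of codimension two. The proof therefore has four steps: (i) show $\phi$ is an eigenfunction on $\SUs{2n}/\Sp n$; (ii) show $0$ is a regular value, so the fibre is a smooth codimension-two submanifold; (iii) conclude minimality; (iv) prove completeness.

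For step (i), I work on the group level with $\hat\phi(z)=\trace(ab^tzJ_nz^t)$. This is invariant under $z\mapsto zq$ for $q\in\Sp n$, since $qJ_nq^t=J_n$. Take an orthonormal basis $\{Y\}$ of the subspace $\p$ of $\sus{2n}$. Left-invariant derivatives give
\[
Y(\hat\phi)(z)=\trace\bigl(ab^tz(YJ_n+J_nY^t)z^t\bigr),
\]
and similarly for second derivatives. Summing over the basis, I use the standard identities for sums like $\sum_Y Y X Y$ and $\sum_Y \trace(AY)\trace(BY)$ over the basis of $\p$; these are the analogues of the ones used in \cite{Geg-Gud-1} for the compact dual $\SU{2n}/\Sp n$, with signs changed by duality. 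I expect $\tau(\phi)=\lambda\phi$ with an explicit constant $\lambda$. I also expect $\kappa(\phi,\phi)$ to be a constant multiple of $\phi^2$ plus a term proportional to $(J_n\bar a)^t b$ or $(a,b)$-type pairings, and that term is exactly what the orthogonality hypothesis on $J_n\bar a$ and $b$ kills.

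For step (ii), note that the set $\{z : \trace(ab^t zJ_nz^t)=0\}$ is one complex equation. Regularity amounts to $d\phi\neq 0$, i.e.\ the complex gradient being non-vanishing, and from the tangent-space computation $Y(\phi)=\trace(ab^tz(YJ_n+J_nY^t)z^t)$ this should reduce to the two rows $z^tb$ and $z^ta$ being linearly independent. That holds because $z$ is invertible and $a,b$ are linearly independent. I also need $d\phi$ to have real rank two. I would check this using the conformality identity: $\kappa(\phi,\phi)=\mu\phi^2=0$ on the fibre, so $\mathrm{grad}\,\Re\phi$ and $\mathrm{grad}\,\Im\phi$ are orthogonal of equal length. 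Hence nonvanishing of $d\phi$ gives rank two. Step (iii) is then immediate from Theorem \ref{theorem-Gud-Mun-1}.

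For step (iv), $\phi^{-1}(\{0\})$ is a closed subset of $\SUs{2n}/\Sp n$, which is a complete Riemannian manifold (a Hadamard manifold). A properly embedded closed submanifold with the induced metric is complete, since a Cauchy sequence in the induced distance is Cauchy in the ambient distance, converges, and its limit lies in the closed fibre. I expect the main obstacle to be the explicit computation of $\kappa(\phi,\phi)$ in step (i): keeping track of the $\p$-basis sums with the $J_n$ twists and the quaternionic structure, and verifying that the cross terms reduce precisely to the hypothesised orthogonality condition. I would organise that calculation by writing $\p=\{X\in\sus{2n}: X=X^*\}$ and using the projection formula onto it.
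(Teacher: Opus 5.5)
\textbf{There is a genuine gap.} You never show that $\phi^{-1}(\{0\})$ is non-empty, i.e.\ that $0\in\phi(M)$. Theorem~\ref{theorem-Gud-Mun-1} requires this, and on a non-compact space it cannot be obtained from Corollary~\ref{corollary-phi0}, which needs compactness. Establishing it is exactly the job of the orthogonality hypothesis, and you have attached that hypothesis to the wrong step.

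By Proposition~\ref{proposition-eigenfunctions-SUs}, $\phi$ is an eigenfunction for \emph{every} linearly independent pair $a,b$. So there is no cross term in $\kappa(\phi,\phi)$ for the hypothesis to kill. On the compact dual, put $W=zJ_nz^t$ and $A=ab^t-ba^t$. Then $\kappa(\phi,\phi)$ is a fixed combination of $\trace((AW)^2)$ and $(\trace AW)^2$. Moreover $\trace((AW)^2)=\tfrac12(\trace AW)^2$ holds identically, because $AW$ has rank two with the double non-zero eigenvalue $b^tWa$.

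What the hypothesis actually expresses is a value of $\phi$ at the base point:
$\phi(I_{2n}\cdot\Sp n)=\trace(ab^tJ_n)=(J_na,b)=\langle J_n\bar a,b\rangle$.
So it says precisely that the base point lies in the fibre, and that is how the paper uses it.

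\textbf{The hypothesis cannot be dropped.} Using $zJ_n=J_n\bar z$ one gets $\hat\phi(z)=(J_nz^ta,z^tb)=\langle z^tJ_n\bar a,z^tb\rangle$. Take $b=J_n\bar a$, which is always linearly independent of any $a\neq 0$. Then $\hat\phi(z)=|z^tJ_n\bar a|^2>0$ on all of $\SUs{2n}$, so the fibre is empty. Your argument as written does not distinguish this case from the one in the theorem.

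\textbf{The repair is one line:} evaluate $\phi$ at $I_{2n}$. The rest of your plan is sound.
\begin{itemize}
\item For regularity, make explicit that $d\hat\phi(z)=0$ would force $J_nz^t(ab^t-ba^t)z\in(\slc{2n})^\perp=\C I_{2n}$. This is impossible for a rank-two matrix when $2n\geq 4$, which is where $n\geq 2$ enters.
\item Your use of $\kappa(\phi,\phi)=0$ on the fibre to upgrade $d\phi\neq 0$ to real rank two is correct.
\item Your closedness argument for completeness is correct.
\end{itemize}
These last two points in fact spell out steps the paper leaves implicit.
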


The proofs of these four results are provided below. 
The readers interested in further details are referred to \cite{Lar-MSc}.

%%%%%%%%%%%%%%%%%%%%%%%%%%%%%%%%%%%%%%%%%%%
\section{Eigenfunctions and Eigenfamilies}
\label{section-eigenfunctions}
%%%%%%%%%%%%%%%%%%%%%%%%%%%%%%%%%%%%%%%%%%%

Let $(M,g)$ be an $m$-dimensional Riemannian manifold and $T^{\cn}M$ be the complexification of the tangent bundle $TM$ of $M$. We extend the metric $g$ to a complex bilinear form on $T^{\cn}M$.  Then the gradient $\nabla\phi$ of a complex-valued function $\phi:(M,g)\to\cn$ is a section of $T^{\cn}M$.  In this situation, we have the well-known complex linear {\it Laplace-Beltrami operator} (alt. {\it tension field}) $\tau$ on $(M,g)$.  In local coordinates this satisfies 
$$
\tau(\phi)=\Div (\nabla \phi)=\sum_{i,j=1}^m\frac{1}{\sqrt{|g|}} \frac{\partial}{\partial x_j}
\left(g^{ij}\, \sqrt{|g|}\, \frac{\partial \phi}{\partial x_i}\right).
$$
For two complex-valued functions $\phi,\psi:(M,g)\to\cn$ we have the following well-known fundamental relation
\begin{equation*}\label{equation-basic}
\tau(\phi\cdot \psi)=\tau(\phi)\cdot\psi +2\,\kappa(\phi,\psi)+\phi\cdot\tau(\psi),
\end{equation*}
where the symmetric complex bilinear {\it conformality operator} $\kappa$ is given by $$\kappa(\phi,\psi)=g(\nabla \phi,\nabla \psi).$$  Locally this satisfies 
$$\kappa(\phi,\psi)=\sum_{i,j=1}^mg^{ij}\cdot\frac{\partial\phi}{\partial x_i}\frac{\partial \psi}{\partial x_j}.$$

The naming of the operator $\kappa$ comes from the fact that $\kappa (\phi,\phi)=0$ if and only if 
$$\kappa (\phi,\phi)=|\nabla u|^2-|\nabla v|^2+2i\cdot  g(\nabla u,\nabla v)=0.$$

\begin{definition}\cite{Gud-Sak-1}\label{definition-eigenfamily}
Let $(M,g)$ be a Riemannian manifold. Then a complex-valued function $\phi:M\to\cn$ is said to be a {\it $(\lambda,\mu)$-eigenfunction} if it is eigen both with respect to the Laplace-Beltrami operator $\tau$ and the conformality operator $\kappa$ i.e. there exist complex numbers $\lambda,\mu\in\cn$ such that $$\tau(\phi)=\lambda\cdot\phi\ \ \text{and}\ \ \kappa(\phi,\phi)=\mu\cdot \phi^2.$$	
A set $\E =\{\phi_i:M\to\cn\ |\ i\in I\}$ of complex-valued functions is said to be a {\it $(\lambda,\mu)$-eigenfamily} on $M$ if there exist complex numbers $\lambda,\mu\in\cn$ such that for all $\phi,\psi\in\E$ we have 
$$\tau(\phi)=\lambda\cdot\phi\ \ \text{and}\ \ \kappa(\phi,\psi)=\mu\cdot \phi\,\psi.$$ 
\end{definition}
\medskip

For the standard odd-dimensional round spheres we have the following eigenfamilies based on the classical real-valued spherical harmonics.

\begin{example}\cite{Gud-Mun-1}\label{example-basic-sphere} 
Let $S^{2n-1}$ be the odd-dimensional unit sphere in the standard Euclidean space $\cn^{n}\cong\rn^{2n}$ and define $\phi_1,\dots,\phi_n:S^{2n-1}\to\cn$ by
$$\phi_j:(z_1,\dots,z_{n})\mapsto \frac{z_j}{\sqrt{|z_1|^2+\cdots +|z_n|^2}}.$$  Then the tension field $\tau$ and the conformality operator $\kappa$ on $S^{2n-1}$ satisfy
$$\tau(\phi_j)=-\,(2n-1)\cdot\phi_j\ \ \text{and}\ \ \kappa(\phi_j,\phi_k)=-\,1\cdot \phi_j\cdot\phi_k.$$
\end{example}
\medskip

For the standard complex projective space $\cn P^n$ we similarly have a complex multidimensional eigenfamily.

\begin{example}\cite{Gud-Mun-1}\label{example-basic-projective-space}
Let $\cn P^n$ be the standard $n$-dimensional complex projective space. For a fixed integer $1\le\alpha < n+1$ and some $1\le j\le\alpha < k\le n+1$  define the function $\phi_{jk}:\cn P^n\to\cn$ by
$$\phi_{jk}:[z_1,\dots,z_{n+1}]\mapsto \frac
{z_j\cdot\bar z_k}{z_1\cdot \bar z_1+\cdots + z_{n+1}\cdot \bar z_{n+1}}.$$  
Then the tension field $\tau$ and the conformality operator $\kappa$ on $\cn P^n$ satisfy
$$\tau(\phi_{jk})=-\,4(n+1)\cdot\phi_{jk}\ \ \text{and}\ \ \kappa(\phi_{jk},\phi_{lm})=-\,4\cdot \phi_{jk}\cdot\phi_{lm}.$$
\end{example}

In recent years, explicit eigenfamilies of complex-valued functions have been found on all the classical compact Riemannian symmetric spaces. For those relevant for this work see Table \ref{table-eigenfamilies}.

\renewcommand{\arraystretch}{2}
\begin{table}[h]\label{table-eigenfamilies}
	\makebox[\textwidth][c]{
		\begin{tabular}{cccc}
			\midrule
			\midrule
$G/K$	& $\lambda$ & $\mu$ & Eigenfunctions \\
\midrule
\midrule
$\SU n/\SO n$ & $-\,\frac{2(n^2+n-2)}{n}$& $-\,\frac{4(n-1)}{n}$ & 
see \cite{Gud-Sif-Sob-2} \\
\midrule
$\Sp n/\U n$ & $-\,2(n+1)$ & $-\,2$ & 
see \cite{Gud-Sif-Sob-2} \\
\midrule
$\SO{2n}/\U n$ & $-\,2(n-1)$ & $-1$ & 
see \cite{Gud-Sif-Sob-2} \\
\midrule
$\SU{2n}/\Sp n$ & $-\,\frac{2(2n^2-n-1)}{n}$ & $-\,\frac{2(n-1)}{n}$ & 
see \cite{Gud-Sif-Sob-2} \\
\midrule
\midrule
\end{tabular}	
}
\bigskip
\caption{Eigenfamilies on the relevant classical compact irreducible Riemannian symmetric spaces.}
\label{table-eigenfamilies}	
\end{table}
\renewcommand{\arraystretch}{1}

We conclude this section with the following two results, particularly useful in the above-mentioned situations of compact Riemannian symmetric spaces.

\begin{proposition}\cite{Gud-Sif-Sob-2}\label{proposition-lift}
Let $\pi:(\hat M,\hat g)\to (M,g)$ be a harmonic Riemannian submersion between Riemannian manifolds. Further let $\phi:(M,g)\to\C$ be a smooth function and $\hat\phi:(\hat M,\hat g)\to\C$ be the composition $\hat\phi=\phi\circ\pi$. Then the corresponding tension fields $\tau$ and conformality operators $\kappa$  satisfy
$$\tau(\hat\phi)=\tau(\phi)\circ\pi\ \ \text{and}\ \
\kappa(\hat\phi,\hat\psi) = \kappa(\phi,\psi)\circ\pi.$$
\end{proposition}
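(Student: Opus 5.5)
We prove Proposition~\ref{proposition-lift} by computing both operators in a local adapted frame of $\hat M$ and comparing with the corresponding computation on $M$.

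\emph{Setup.} Since $\pi$ is a Riemannian submersion, at each point of $\hat M$ the tangent space splits orthogonally into the vertical space $\mathcal V=\ker d\pi$ and the horizontal space $\mathcal H$. The differential $d\pi$ maps $\mathcal H$ isometrically onto the tangent space of $M$.

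\emph{Conformality operator.} For any smooth $\phi$ on $M$ we have $d\hat\phi=d\phi\circ d\pi$, so $d\hat\phi$ vanishes on $\mathcal V$. Hence $\nabla\hat\phi$ is horizontal and is the horizontal lift of $\nabla\phi$; this holds for complex-valued functions by complex linearity. Because $d\pi|_{\mathcal H}$ is an isometry, extended complex-bilinearly,
$$
\kappa(\hat\phi,\hat\psi)=\hat g(\nabla\hat\phi,\nabla\hat\psi)=g(\nabla\phi,\nabla\psi)\circ\pi=\kappa(\phi,\psi)\circ\pi .
$$

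\emph{Tension field.} We use the composition law for tension fields,
$$
\tau(\phi\circ\pi)=d\phi(\tau(\pi))+\trace\nabla d\phi(d\pi,d\pi).
$$
Since $\pi$ is harmonic, $\tau(\pi)=0$ and the first term drops out.

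For the second term, take a local orthonormal frame $\{X_1,\dots,X_m,V_1,\dots,V_k\}$ on $\hat M$ with the $X_i$ horizontal and the $V_r$ vertical. Then $d\pi(V_r)=0$, and $\{d\pi(X_i)\}$ is an orthonormal frame on $M$ at the image point. Therefore
$$
\trace\nabla d\phi(d\pi,d\pi)=\sum_{i=1}^m\nabla d\phi\big(d\pi(X_i),d\pi(X_i)\big)=\tau(\phi)\circ\pi .
$$

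\emph{Expected obstacle.} The only delicate point is the composition formula, i.e. the chain rule for second fundamental forms. It gives $\nabla d(\phi\circ\pi)=d\phi(\nabla d\pi)+\nabla d\phi(d\pi,d\pi)$, and taking the trace yields the formula above. This is standard, and it extends to complex-valued $\phi$ by applying it to the real and imaginary parts separately.

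If one prefers to avoid it, there is a direct alternative. Write $\tau(\hat\phi)=\sum_i\big(X_iX_i\hat\phi-(\hat\nabla_{X_i}X_i)\hat\phi\big)+\sum_r\big(V_rV_r\hat\phi-(\hat\nabla_{V_r}V_r)\hat\phi\big)$, taking the $X_i$ to be basic lifts.
\begin{itemize}
\item $V_r\hat\phi=0$, because $\hat\phi$ is constant on fibres.
\item $\sum_r\hat\nabla_{V_r}V_r$ has horizontal part equal to minus the mean curvature vector of the fibres (up to the factor $k$). This vanishes exactly because $\pi$ is harmonic: a Riemannian submersion is harmonic iff its fibres are minimal.
\item The horizontal part of $\hat\nabla_{X_i}X_i$ is the lift of $\nabla_{d\pi X_i}d\pi X_i$, by O'Neill's formulas, and its vertical part annihilates $\hat\phi$.
\end{itemize}
Summing these contributions again gives $\tau(\hat\phi)=\tau(\phi)\circ\pi$.
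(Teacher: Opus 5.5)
Your proposal is correct and follows essentially the same route as the argument the paper defers to in \cite{Gud-Sif-Sob-2}: the composition law $\tau(\phi\circ\pi)=d\phi(\tau(\pi))+\trace\nabla d\phi(d\pi,d\pi)$ with $\tau(\pi)=0$ gives the tension field identity, and the horizontality of $\nabla\hat\phi$ together with the isometry $d\pi|_{\mathcal H}$ gives the conformality operator identity. Your alternative frame computation, using minimal fibres and O'Neill's formulas, is also sound; it simply unpacks the composition law by hand.
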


\begin{proof} The arguments needed here can be found in \cite{Gud-Sif-Sob-2}.
\end{proof}

In the sequel, we shall apply the following immediate consequence of Proposition \ref{proposition-lift}.

\begin{corollary}{\rm \cite{Gud-Sif-Sob-2}}\label{corollary-lift-eigen}
Let $\pi:(\hat M,\hat g)\to (M,g)$ be a harmonic Riemannian submersion.  For a complex-valued smooth function $\phi:(M,g)\to\C$ let $\hat \phi:(\hat M,\hat g)\to\C$ be the composition $\hat \phi=\phi\circ\pi$. Then the following statements are equivalent
\begin{enumerate}
\item[(i)] 
$\phi:M\to\C$ is a $(\lambda,\mu)$-eigenfunction on $M$,
\item[(ii)] 
$\hat\phi:\hat M\to\C$ is a $(\lambda,\mu)$-eigenfunction on $\hat M$.
\end{enumerate}
\end{corollary}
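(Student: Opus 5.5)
The statement just above is Corollary~\ref{corollary-lift-eigen}. I would derive it directly from Proposition~\ref{proposition-lift}, using that $\pi$ is a surjective submersion.

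First, the direction (i)$\Rightarrow$(ii). Suppose $\tau(\phi)=\lambda\phi$ and $\kappa(\phi,\phi)=\mu\phi^2$ on $M$. Proposition~\ref{proposition-lift}, applied with $\psi=\phi$, gives
\[
\tau(\hat\phi)=\tau(\phi)\circ\pi=\lambda\,(\phi\circ\pi)=\lambda\hat\phi .
\]
In the same way,
\[
\kappa(\hat\phi,\hat\phi)=\kappa(\phi,\phi)\circ\pi=\mu\,(\phi\circ\pi)^2=\mu\hat\phi^2 .
\]
So $\hat\phi$ is a $(\lambda,\mu)$-eigenfunction on $\hat M$.

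Next, the direction (ii)$\Rightarrow$(i). Assume $\tau(\hat\phi)=\lambda\hat\phi$ and $\kappa(\hat\phi,\hat\phi)=\mu\hat\phi^2$. By Proposition~\ref{proposition-lift} these identities read
\[
\bigl(\tau(\phi)-\lambda\phi\bigr)\circ\pi=0
\qquad\text{and}\qquad
\bigl(\kappa(\phi,\phi)-\mu\phi^2\bigr)\circ\pi=0
\]
on $\hat M$. The only real issue is passing from these identities on $\hat M$ back to $M$. This requires $\pi$ to be surjective, which is part of the standard convention that a Riemannian submersion is onto. Given surjectivity, every $p\in M$ equals $\pi(\hat p)$ for some $\hat p\in\hat M$, so both bracketed functions vanish at $p$. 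Hence $\phi$ is a $(\lambda,\mu)$-eigenfunction on $M$.

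If surjectivity were not part of the definition, the image $\pi(\hat M)$ would still be open, since $\pi$ is a submersion. In that case the converse would only hold on that open set, or on all of $M$ by continuity whenever the image is dense. The same argument applies verbatim to eigenfamilies, using the bilinear form $\kappa(\hat\phi,\hat\psi)$ in place of $\kappa(\hat\phi,\hat\phi)$.
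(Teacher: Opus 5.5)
Your proof is correct and is the argument the paper has in mind. The paper gives no separate proof and simply calls the corollary an immediate consequence of Proposition~\ref{proposition-lift}; your remark that the surjectivity of $\pi$ (part of the standard definition of a Riemannian submersion) is what makes (ii)$\Rightarrow$(i) work is a worthwhile detail the paper leaves implicit.
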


%%%%%%%%%%%%%%%%%%%%%%%%%%%%%%%%%%%%%%%%%%%
\section{Minimal Submanifolds via Eigenfunctions}
\label{section-minimal-submanifolds}
%%%%%%%%%%%%%%%%%%%%%%%%%%%%%%%%%%%%%%%%%%%

The recent paper \cite{Gud-Mun-1} provides an application of complex-valued eigenfunctions.  This is a method for constructing minimal submanifolds of codimension two.

\begin{theorem}
{\cite{Gud-Mun-1}}\label{theorem-Gud-Mun-1}
Let $\phi:(M,g)\to\cn$ be a complex-valued eigenfunction on a Riemannian manifold, such that $0\in\phi(M)$ is a regular value for $\phi$.  Then the fibre $\phi^{-1}(\{0\})$ is a minimal submanifold of $M$ of codimension two.
\end{theorem}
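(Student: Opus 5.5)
The plan is to show that the mean curvature of $F=\phi^{-1}(\{0\})$ vanishes, expressed through the tension field and the conformality operator. Set $\phi=u+iv$ with $u,v$ real. Since $0$ is a regular value, $\nabla u$ and $\nabla v$ are linearly independent along $F$, and they span the normal bundle $N F$. So $F$ is an embedded submanifold of codimension two. For a level set of a submersion $\Phi=(u,v):M\to\R^2$, the standard formula for the mean curvature vector $H$ of a regular fibre, say for $m-2\ge 1$, reads as follows. For every normal direction $\nu=\sum_\alpha c_\alpha\nabla\phi_\alpha$, the quantity $(m-2)\,g(H,\nu)$ is $-\sum_\alpha c_\alpha\bigl(\tau(\phi_\alpha)-\trace_{NF}\nabla^2\phi_\alpha\bigr)$, up to sign conventions, where $\phi_1=u$, $\phi_2=v$. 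This follows from $\nabla^2\phi_\alpha(X,Y)=-g(B(X,Y),\nabla\phi_\alpha)$ for $X,Y$ tangent to $F$, where $B$ is the second fundamental form. Thus $F$ is minimal if and only if, along $F$,
$$\tau(\phi_\alpha)=\trace_{NF}\nabla^2\phi_\alpha,\qquad \alpha=1,2,$$
where the trace over the normal bundle is taken with respect to the basis $\nabla u,\nabla v$ and its Gram matrix.

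Next I use the eigenfunction equations on $F$. From $\tau(\phi)=\lambda\phi$ we get $\tau(u)=\tau(v)=0$ on $F$, since $\phi=0$ there. So it remains to show $\trace_{NF}\nabla^2\phi=0$ on $F$, and its real and imaginary parts give the two real conditions. I would differentiate the conformality identity $g(\nabla\phi,\nabla\phi)=\mu\phi^2$. For any vector $X$ this gives $2\nabla^2\phi(\nabla\phi,X)=2\mu\phi\,d\phi(X)$, which vanishes on $F$. Hence on $F$ we have $\nabla^2\phi(\nabla\phi,\cdot)=0$. Since $\nabla^2\phi$ is symmetric and complex bilinear, this gives $\nabla^2\phi(\nabla\phi,\nabla\phi)=0$ and also $\nabla^2\phi(\nabla\phi,\nabla\bar\phi)=0$.

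The key step, and the main obstacle, is to pass from these vanishing statements to the vanishing of the normal trace. I would work in the complex basis $\nabla\phi,\nabla\bar\phi$ of $N^{\cn}F$. In this basis the Gram matrix is $\begin{pmatrix}\kappa(\phi,\phi)&\kappa(\phi,\bar\phi)\\ \kappa(\bar\phi,\phi)&\kappa(\bar\phi,\bar\phi)\end{pmatrix}$. On $F$ this equals $\begin{pmatrix}0&|\nabla\phi|^2\\ |\nabla\phi|^2&0\end{pmatrix}$, because $\mu\phi^2=0$ there; here $|\nabla\phi|^2=g(\nabla\phi,\nabla\bar\phi)>0$ by regularity. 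The inverse Gram matrix is therefore off-diagonal, so
$$\trace_{NF}\nabla^2\phi=\frac{2\,\nabla^2\phi(\nabla\phi,\nabla\bar\phi)}{|\nabla\phi|^2}=0 .$$
The same holds for $\bar\phi$ by conjugation, and hence for $u$ and $v$ by taking real and imaginary parts. Combined with $\tau(u)=\tau(v)=0$, this gives $H=0$. I expect the care to lie in fixing the sign conventions and the normal-trace formula for the mean curvature of a fibre. This is exactly where the nullity of $\kappa(\phi,\phi)$ on $F$ is used: it is what makes the inverse Gram matrix off-diagonal.
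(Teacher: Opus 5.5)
Your proof is correct. The paper does not prove this statement: it quotes it from \cite{Gud-Mun-1} and uses it as a black box. So there is no in-text argument to compare with, and your proposal stands as a complete, self-contained proof.

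The criterion you use is the right one: $F$ is minimal if and only if $\tau(f)=\operatorname{tr}_{NF}\nabla^2 f$ on $F$ for $f=u,v$, which follows from $\operatorname{tr}_{TF}\nabla^2 f=-(m-2)\,g(H,\nabla f)$. Passing to the complex frame $\nabla\phi,\nabla\bar\phi$ of $N^{\cn}F$ is legitimate, because the metric trace of a complex-bilinear form does not depend on the choice of complex basis.

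Your computation actually gives a bit more. On any regular fibre $F$ on which $\kappa(\phi,\phi)$ vanishes,
\[
\operatorname{tr}_{NF}\nabla^2\phi=\frac{2\,\nabla^2\phi(\nabla\phi,\nabla\bar\phi)}{\kappa(\phi,\bar\phi)}=\frac{\kappa\bigl(\bar\phi,\kappa(\phi,\phi)\bigr)}{\kappa(\phi,\bar\phi)},
\]
so such an $F$ is minimal if and only if $\kappa(\phi,\bar\phi)\,\tau(\phi)=\kappa\bigl(\bar\phi,\kappa(\phi,\phi)\bigr)$ along $F$.

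This shows that the conformality condition is used twice. First, $\kappa(\phi,\phi)=\mu\phi^2$ vanishes on $F$, which makes the Gram matrix off-diagonal. Second, it vanishes to second order along $F$, which kills the numerator. The second use cannot be dropped. Take $\phi=\log r+iz$ on $\rn^3$ minus the $z$-axis, with $r$ the distance to that axis. It is harmonic and has $\kappa(\phi,\phi)=r^{-2}-1$, so both $\tau(\phi)$ and $\kappa(\phi,\phi)$ vanish on the regular fibre $\phi^{-1}(\{0\})$, the unit circle in the plane $z=0$. Yet this circle is not a geodesic, consistent with $\kappa\bigl(\bar\phi,\kappa(\phi,\phi)\bigr)=-2\neq 0$ there.
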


The main aim of our work is to apply Theorem \ref{theorem-Gud-Mun-1} in several of the interesting cases when the manifold $(M,g)$ is one of the classical non-compact Riemannian symmetric spaces.

The next result, from Riedler and Siffert's paper \cite{Rie-Sif} supplies us with a straightforward way of checking whether an eigenfunction, on a {\it compact} and connected Riemannian manifold, attains the required value $0\in\cn$.

\begin{theorem}{\rm \cite{Rie-Sif}}\label{phi0}
Let $(M,g)$ be a compact and connected Riemannian manifold and let $\phi:M\rightarrow\cn$ be a $(\lambda,\mu)$-eigenfunction not identically zero. Then the following are equivalent.
\begin{enumerate}
\item $\lambda=\mu$.
\item $\rvert\phi\rvert^2$ is constant.
\item $\phi(x)\neq0$ for all $x\in M.$
\end{enumerate}
\end{theorem}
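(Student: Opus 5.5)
The plan is to prove $(1)\Rightarrow(2)\Rightarrow(3)\Rightarrow(1)$. Throughout I work with the real function $f=|\phi|^2=\phi\bar\phi$ and compute everything from the product rule $\tau(\phi\psi)=\tau(\phi)\psi+2\kappa(\phi,\psi)+\phi\tau(\psi)$ together with the eigen-equations.

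\emph{The two basic identities.}
Since $\kappa(\bar\phi,\bar\phi)=\bar\mu\bar\phi^2$ and $\kappa(\phi,\bar\phi)=|\nabla\phi|^2\ge 0$, the product rule gives
$$
\tau(f)=(\lambda+\bar\lambda)f+2|\nabla\phi|^2 .
$$
Next, $\nabla f=\bar\phi\nabla\phi+\phi\nabla\bar\phi$, so
$$
|\nabla f|^2=\bar\phi^2\kappa(\phi,\phi)+\phi^2\kappa(\bar\phi,\bar\phi)+2f|\nabla\phi|^2=(\mu+\bar\mu)f^2+2f|\nabla\phi|^2 .
$$
Subtracting, the gradient terms cancel and we obtain the key identity
$$
f\,\tau(f)-|\nabla f|^2=2\,\Re(\lambda-\mu)\,f^2 .
$$

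\emph{Step $(1)\Rightarrow(2)$.}
If $\lambda=\mu$, the key identity reads $f\tau(f)=|\nabla f|^2$ on all of $M$. Since $M$ is compact and without boundary, Green's formula gives $\int_M f\tau(f)=-\int_M|\nabla f|^2$. Hence $2\int_M|\nabla f|^2=0$, so $f$ is constant because $M$ is connected.

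\emph{Step $(2)\Rightarrow(3)$.}
If $|\phi|^2$ is a constant $c$, then $c\neq 0$, since $\phi$ is not identically zero. Hence $\phi$ has no zeros.

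\emph{Step $(3)\Rightarrow(1)$.}
Now $f>0$, so $\log f$ is smooth on $M$. By the key identity,
$$
\tau(\log f)=\frac{f\tau(f)-|\nabla f|^2}{f^2}=2\,\Re(\lambda-\mu),
$$
which is constant. Integrating over the closed manifold $M$ gives $0=2\Re(\lambda-\mu)\,\mathrm{vol}(M)$, so $\Re\lambda=\Re\mu$. Then $\log f$ is harmonic, hence constant, so $f\equiv c>0$.

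It remains to identify the imaginary parts.
\begin{itemize}
\item Since $\phi\not\equiv0$ is an eigenfunction of the self-adjoint Laplacian on a compact manifold, $\lambda$ is real.
\item From $\nabla f=0$ we get $\nabla\bar\phi=-(\bar\phi/\phi)\nabla\phi$, and therefore $|\nabla\phi|^2=\kappa(\phi,\bar\phi)=-(\bar\phi/\phi)\mu\phi^2=-\mu c$. The left-hand side is real, so $\mu$ is real.
\end{itemize}
Hence $\lambda=\mu$.

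The main obstacle is recognising the identity $f\tau(f)-|\nabla f|^2=2\Re(\lambda-\mu)f^2$, in which the uncontrolled term $|\nabla\phi|^2$ drops out. Once this is in hand, each implication is a one-line integration argument. The only remaining care is the reality of $\lambda$ and $\mu$, which the identity controls only through their real parts.
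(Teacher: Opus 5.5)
Your proof is correct and complete. The paper gives no proof of this statement: it is quoted from Riedler and Siffert and used only through Corollary~\ref{corollary-phi0}. So there is no in-paper argument to compare against, and your write-up stands as a self-contained proof.

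The points that needed checking all hold:
\begin{itemize}
\item The key identity $f\,\tau(f)-|\nabla f|^2=2\,\Re(\lambda-\mu)f^2$ is right, since the terms $2f\,\kappa(\phi,\bar\phi)$ cancel exactly.
\item The Green's-formula step and the $\log f$ step are valid on a closed manifold.
\item Both reality arguments go through. The identity $\lambda\|\phi\|_{L^2}^2=-\|\nabla\phi\|_{L^2}^2$ needs only $\phi\not\equiv 0$, which holds in step $(3)\Rightarrow(1)$ because $\phi$ has no zeros. The identity $|\nabla\phi|^2=-\mu c$ with $c>0$ forces $\mu\in\R$, and in fact $\mu\le 0$.
\end{itemize}

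One possible economy: integrate your key identity with no hypothesis at all. This gives
\[
\Re(\lambda-\mu)\int_M f^2=-\int_M|\nabla f|^2 ,
\]
so $\Re\lambda\le\Re\mu$ always, with equality exactly when $|\phi|^2$ is constant. This contains your step $(1)\Rightarrow(2)$. It also lets you skip the harmonicity argument for $\log f$ in step $(3)\Rightarrow(1)$ once $\Re\lambda=\Re\mu$ has been obtained.
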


As an obvious consequence we have the following.

\begin{corollary}\label{corollary-phi0}
If $\phi:M\rightarrow\cn$ is a complex-valued $(\lambda,\mu)$-eigenfunction on a compact and connected Riemannian manifold $(M,g)$ such that $\lambda\neq\mu,$ then there exists $x\in M$ such that $\phi(x)=0$.
\end{corollary}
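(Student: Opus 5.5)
This corollary follows from Theorem \ref{phi0} by contraposition. Suppose $\phi:M\to\cn$ is a $(\lambda,\mu)$-eigenfunction on the compact connected manifold $(M,g)$ with $\lambda\neq\mu$.

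First we make sure the hypothesis of Theorem \ref{phi0} is met, namely that $\phi$ is not identically zero. If $\phi\equiv 0$, then trivially $\phi(x)=0$ for every $x\in M$. Since $M$ is non-empty, the conclusion holds at once. So we may assume $\phi\not\equiv0$.

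In that case Theorem \ref{phi0} applies, and conditions (1) and (3) are equivalent. Statement (3), that $\phi(x)\neq0$ for all $x\in M$, would therefore force $\lambda=\mu$, contradicting our assumption. Hence (3) fails, which means there is some $x\in M$ with $\phi(x)=0$.

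There is no real obstacle here. The only point needing care is the degenerate case $\phi\equiv0$, which Theorem \ref{phi0} excludes and which we have handled separately above.
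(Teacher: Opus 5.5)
Your proposal is correct and matches the paper, which states the corollary as an obvious consequence of Theorem \ref{phi0}: if $\lambda\neq\mu$, condition (1) fails, so (3) fails, and $\phi$ has a zero. Treating $\phi\equiv 0$ separately is a worthwhile bit of care, since the paper leaves that case implicit.
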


In the cases of {\it non-compact} Riemannian symmetric space, here under investigation, this is clearly not available, and we therefore need to approach this is a different manner.  For this see Lemma \ref{lemma-regular-orthogonal}.
 
%%%%%%%%%%%%%%%%%%%%%%%%%%%%%%%%%%%%%%%%%%%%%%%%%%%%%%%
\section{Riemannian Symmetric Spaces and Their Duality}
\label{section-duality}
%%%%%%%%%%%%%%%%%%%%%%%%%%%%%%%%%%%%%%%%%%%%%%%%%%%%%%%

Let $(M,g)$ be a connected {\it non-compact} Riemannian symmetric space.  Then $M$ is isometric to the quotient $G/K$ under a suitable left-invariant metric.  Here $G$ is the connected component of the isometry group of $(M,g)$ containing the neutral element and $K$ is a maximal compact subgroup of $G$.  For this we have the orthogonal Cartan decomposition
$$
\g=\k\oplus \p
$$ 
of the Lie algebra $\g$ of $G$, where $\k$ is the Lie algebra of the compact subgroup $K$ of $G$ and $\p$ its orthogonal complement in $\g$. Let $G^\cn$ be the complexification of $G$.  Then $G^\cn$ is a Lie group with Lie algebra 
$$
\g^\cn=\g\oplus i\, \g = \k\oplus \p\oplus i\,\k\oplus i\,\p
$$ 
Let $U$ be the subgroup of $G^\cn$ with the Lie subalgebra $\mathfrak{u} =\k\oplus i\,\p$ of $\g^\cn$.  Then $U$ is compact and the quotient $U/K$ is a Riemannian symmetric space called the {\it compact dual} of the non-compact $M=G/K$. The corresponding natural projections $\pi_G:G \to G/K$ and $\pi_U: U \to U/K$ are Riemannian submersions.  For the general theory of symmetric spaces we refer to the standard work \cite{Hel} of Helgason.
\smallskip 

Let $W$ and $W^*$ be open subsets of $G/K$ and $U/K$, respectively. Two real-analytic functions 
$$
\phi:W \to \C,\quad\phi^*:W^* \to \C
$$ 
are said to be {\it dual} if there is an open subset $W^\C$ of the shared complexified Lie group $G^\C = U^\C$ and an analytic function $\phi^\C:W^\C \to \C$ such that 
$$
W = \pi_G(W^\C\cap G),\quad W^* = \pi_U(W^\C\cap U)
$$ 
and 
$$
\phi = \pi_G\circ\phi^\C|_G,\quad \phi^* = \pi_U\circ\phi^\C|_U.
$$

For the above situation we have the following useful result.

\begin{theorem}\label{theorem-duality}\cite{Gud-Sve-2}
Let $G/K$ and $U/K$ be a dual pair of Riemannian symmetric spaces and $W$ an open subset of $G$. If $\phi,\psi:W\to\C$ are real analytic functions and $\phi^*,\psi^*:W^* \to \C$ are their duals, then we have
$$\tau(\phi^*) = -\tau(\phi)^*\ \ \text{and}\ \ 
\kappa(\phi^*,\psi^*) = -\kappa(\phi,\psi)^*.$$
\end{theorem}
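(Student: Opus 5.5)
The plan is to reduce both operators to second-order expressions in left-invariant vector fields on the groups, and then compare these on $G$ and $U$ through the holomorphic extension $\phi^\C$ on $W^\C\subset G^\C$. The projections $\pi_G$ and $\pi_U$ are Riemannian submersions with totally geodesic fibres (the cosets of $K$), hence harmonic. By Proposition \ref{proposition-lift} it therefore suffices to prove the identities for the lifts $\hat\phi=\phi^\C|_G$ and $\hat\phi^*=\phi^\C|_U$. These lifts are right $K$-invariant, and their tension fields and conformality operators are the lifts of those downstairs. I read the duality of $\tau(\phi)$ and $\tau(\phi^*)$ in this same sense, as restrictions of one holomorphic function on $W^\C$.

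\textbf{Step 1: formulas on $G$ and on $U$.} Let $X_1,\dots,X_m$ be an orthonormal basis of $\p$ and $Y_1,\dots,Y_l$ one of $\k$, viewed as left-invariant vector fields on $G$. The right $K$-invariant function $\hat\phi$ is annihilated by every $Y_j$. Hence
$$\tau(\hat\phi)=\sum_k\bigl(X_k^2\hat\phi-(\nabla_{X_k}X_k)\hat\phi\bigr),\qquad \kappa(\hat\phi,\hat\psi)=\sum_k X_k(\hat\phi)\,X_k(\hat\psi).$$
Next I will show $\nabla_{X_k}X_k\in\k$, or better that $\sum_k\nabla_{X_k}X_k=0$, using Koszul's formula for left-invariant fields: $g(\nabla_XX,Z)=g([Z,X],X)$.
\begin{itemize}
\item For $Z\in\p$ we have $[Z,X]\in\k\perp\p$, so this term vanishes.
\item For $Z\in\k$, the map $\mathrm{ad}_Z$ is skew on $\p$ because the metric is $\mathrm{Ad}(K)$-invariant, so $g([Z,X],X)=0$.
\end{itemize}
Thus $\tau(\hat\phi)=\sum_kX_k^2\hat\phi$. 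On $U$ the elements $iX_1,\dots,iX_m$ form an orthonormal basis of $i\p$, since the metrics on both spaces come from the same (suitably signed) Killing form. The identical argument gives $\tau(\hat\phi^*)=\sum_k(iX_k)^2\hat\phi^*$ and $\kappa(\hat\phi^*,\hat\psi^*)=\sum_k (iX_k)\hat\phi^*\,(iX_k)\hat\psi^*$.

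\textbf{Step 2: holomorphic comparison.} For $g\in W^\C$ and $X\in\g$, the map $z\mapsto\phi^\C(g\exp(zX))$ is holomorphic near $0$. By the chain rule for holomorphic functions,
$$\frac{d}{dt}\phi^\C(g\exp(t\,iX))\Big|_{t=0}=i\,\frac{d}{dt}\phi^\C(g\exp(tX))\Big|_{t=0}.$$
So, as operators on holomorphic functions on $W^\C$, $(iX)=i\cdot X$, and then $(iX)^2=-X^2$ and $(iX)\phi^\C\,(iX)\psi^\C=-X\phi^\C\,X\psi^\C$. Define the holomorphic functions on $W^\C$
$$T=\sum_kX_k^2\phi^\C,\qquad Q=\sum_kX_k\phi^\C\,X_k\psi^\C.$$
Their restrictions to $W^\C\cap G$ are $\tau(\hat\phi)$ and $\kappa(\hat\phi,\hat\psi)$, because the derivatives along $G$-curves of the restriction agree with those of $\phi^\C$. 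By the same reasoning, the restrictions of $-T$ and $-Q$ to $W^\C\cap U$ are $\tau(\hat\phi^*)$ and $\kappa(\hat\phi^*,\hat\psi^*)$. This is exactly the statement $\tau(\phi^*)=-\tau(\phi)^*$ and $\kappa(\phi^*,\psi^*)=-\kappa(\phi,\psi)^*$, after pushing down by $\pi_G$ and $\pi_U$.

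\textbf{Main obstacle.} I expect the delicate points to be the normalisations rather than the holomorphic identity itself. One must verify that the Riemannian metrics on $G/K$ and $U/K$ really make $\{X_k\}$ and $\{iX_k\}$ simultaneously orthonormal; this is where the sign flip of the Killing form between $\p$ and $i\p$ is used. One must also check the vanishing of $\sum_k\nabla_{X_k}X_k$ on both $G$ and $U$. I would first try to settle both by working directly on the symmetric spaces via the standard identification $T_o(G/K)\cong\p$ and $T_o(U/K)\cong i\p$, together with homogeneity. In that setting the mean-curvature terms are known to vanish at the base point and then everywhere by invariance.
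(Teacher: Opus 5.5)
Your proposal is correct and follows essentially the argument of \cite{Gud-Sve-2}, to which the paper defers without giving details. That argument passes to the group level via the harmonic Riemannian submersions, writes $\tau$ and $\kappa$ in terms of orthonormal bases $\{X_k\}$ of $\p$ and $\{iX_k\}$ of $i\p$ (with $\nabla_{X_k}X_k=0$), and uses holomorphy of $\phi^\C$ to get $(iX)^2\phi^\C=-X^2\phi^\C$ and $(iX)\phi^\C\,(iX)\psi^\C=-X\phi^\C\,X\psi^\C$. The one step you leave implicit is why the $\k$-directions drop out of $\tau(\hat\phi)$: besides $Y_j\hat\phi=0$ you also need $(\nabla_{Y_j}Y_j)\hat\phi=0$, which follows either from the same Koszul computation (indeed $\nabla_YY=0$ for $Y\in\k$) or from the fibres being totally geodesic, as you already noted.
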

\begin{proof}
The details can be found in the proof of Theorem \ref{theorem-duality} in \cite{Gud-Sve-2}, see also \cite{Gud-Mon-Rat-1}.
\end{proof}

\begin{corollary}\label{corollary-dual-eigenfamilies}\cite{Gud-Sve-2}
Let $G/K$ and $U/K$ be a dual pair of Riemannian symmetric spaces. Then a collection $\E$ of complex-valued analytic functions on $G/K$ is a $(\lambda,\mu)$-eigenfamily if and only if the collection $\E^*$ of dual functions is a $(-\lambda,-\mu)$-eigenfamily on $U/K$.
\end{corollary}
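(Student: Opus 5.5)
The statement to prove is Corollary \ref{corollary-dual-eigenfamilies}. It should follow directly from Theorem \ref{theorem-duality}, applied to each function and each pair of functions in the family. The plan is to prove the forward direction and then get the converse from the symmetry of the construction.

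First I fix the setup. Let $\E=\{\phi_i\}$ be a family of real-analytic functions on (an open subset $W$ of) $G/K$, and let $\E^*=\{\phi_i^*\}$ be the family of dual functions on $W^*\subset U/K$. These duals come from a common analytic extension $\phi_i^\C$ on $W^\C\subset G^\C$.

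The first step is the key observation that duality is compatible with the algebraic operations. Sums, products and scalar multiples of analytic functions on $W^\C$ restrict to the corresponding sums, products and scalar multiples on $G$ and on $U$. Hence $(\lambda\phi)^*=\lambda\phi^*$ and $(\mu\,\phi\psi)^*=\mu\,\phi^*\psi^*$, where the analytic extension of a product is taken to be the product of the extensions. This is the step that needs a little care. For uniqueness I would use the fact that $G$ (respectively $U$) is a totally real submanifold of maximal dimension in $G^\C$. An analytic function on a connected open set $W^\C$ is therefore determined by its restriction to $W^\C\cap G$, or to $W^\C\cap U$. It follows that the dual of a function is well defined, at least after shrinking to connected $W^\C$, and that the operator identities of Theorem \ref{theorem-duality} can be compared term by term.

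Next comes the forward direction. Suppose $\tau(\phi)=\lambda\phi$ and $\kappa(\phi,\psi)=\mu\,\phi\psi$ for all $\phi,\psi\in\E$. Theorem \ref{theorem-duality} gives
$$\tau(\phi^*)=-\tau(\phi)^*=-(\lambda\phi)^*=-\lambda\,\phi^*,$$
and similarly
$$\kappa(\phi^*,\psi^*)=-\kappa(\phi,\psi)^*=-\mu\,\phi^*\psi^*.$$
Hence $\E^*$ is a $(-\lambda,-\mu)$-eigenfamily.

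For the converse I note that the relation of duality is symmetric. The space $G/K$ is the noncompact dual of $U/K$, obtained through the same complexification $G^\C=U^\C$, and $(\phi^*)^*=\phi$. Theorem \ref{theorem-duality} applied with the roles of $G/K$ and $U/K$ swapped then gives the reverse implication with signs flipped again, which returns $(\lambda,\mu)$. Alternatively, one can read the identities $\tau(\phi^*)=-\tau(\phi)^*$ backwards: by the uniqueness step, $\tau(\phi^*)=-\lambda\phi^*$ forces $\tau(\phi)^*=(\lambda\phi)^*$, and hence $\tau(\phi)=\lambda\phi$.

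The only real obstacle is this uniqueness of analytic continuation from a totally real submanifold. It is standard, so I expect the corollary to be essentially immediate.
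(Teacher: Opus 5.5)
Your proposal is correct and is essentially the paper's argument: the paper simply declares the corollary a direct consequence of Theorem \ref{theorem-duality}. You supply the routine details it leaves implicit, namely compatibility of duality with scalar multiples and products, uniqueness of the extension from the maximal totally real submanifolds $G$ and $U$, and the symmetry of the duality relation for the converse. Note only that the uniqueness step requires $\phi^\C$ to be holomorphic rather than merely real-analytic, which is indeed the intended meaning of \emph{analytic} in the definition of dual functions.
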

\begin{proof}
This is a direct consequence of Theorem \ref{theorem-duality}.
\end{proof}

%%%%%%%%%%%%%%%%%%%%%%%%%%%%%%%%%%%%%%%%%%%%%%%%%%%%%%%

%%%%%%%%%%%%%%%%%%%%%%%%%%%%%%%%%%%%%%%%%%%%%%%%%%%%%%%
\section{The General Linear Group $\GLC n$}
\label{section-GLC}
%%%%%%%%%%%%%%%%%%%%%%%%%%%%%%%%%%%%%%%%%%%%%%%%%%%%%%%

In this section we now turn our attention to the concrete Riemannian matrix Lie groups embedded as subgroups of the complex general linear group.
\medskip

The group of linear automorphisms of $\cn^n$ is the complex general linear group $\GLC n=\{ z\in\cn^{n\times n}\,|\, \det z\neq 0\}$ of invertible $n\times n$ matrices with its standard representation 
$$z\mapsto
\begin{bmatrix}
	z_{11} & \cdots & z_{1n} \\
	\vdots & \ddots & \vdots \\
	z_{n1} & \cdots & z_{nn}
\end{bmatrix}.
$$
Its Lie algebra $\glc n$ of left-invariant vector fields on $\GLC n$ can be identified with $\cn^{n\times n}$ i.e.
 the complex linear space of $n\times n$ matrices.  We equip $\GLC n$ with its natural left-invariant Riemannian metric $g$ induced by the standard Euclidean inner product $g:\glc n\times\glc n\to\rn$ on its Lie algebra $\glc n$ satisfying
$$g(Z,W)\mapsto \Re\,\trace\, (Z\cdot W^*).$$ 
For $1\le i,j\le n$, we shall by $E_{ij}$ denote the element of $\rn^{n\times n}$ satisfying
$$(E_{ij})_{kl}=\delta_{ik}\delta_{jl}$$ and by $D_t$ the diagonal
matrices $D_t=E_{tt}.$ For $1\le r<s\le n$, let $X_{rs}$ and
$Y_{rs}$ be the matrices satisfying
$$X_{rs}=\frac 1{\sqrt 2}(E_{rs}+E_{sr}),\ \ Y_{rs}=\frac
1{\sqrt 2}(E_{rs}-E_{sr}).$$
The real vector space $\glc n$ then has the canonical orthonormal basis $\B^\cn=\B\cup i\B$, where 
$$\B=\{Y_{rs}, X_{rs}\,|\, 1\le r<s\le n\}\cup\{D_{t}\,|\, t=1,2,\dots,n\}.$$
\vskip .1cm

Let $G$ be a classical Lie subgroup of $\GLC n$ with Lie algebra $\g$ inheriting the induced left-invariant Riemannian metric, which we shall also denote by $g$.  In the cases considered in this paper, $\B_{\g}=\B^\cn\cap\g$ will be an orthonormal basis for the subalgebra $\g$ of $\glc n$.  By employing the Koszul formula for the Levi-Civita connection $\nabla$ on $(G,g)$, we see that for all $Z,W\in\B_{\g}$ we have
\begin{eqnarray*}
	g(\nab ZZ,W)&=&g([W,Z],Z)\\
	&=&\Re\,\trace\, ((WZ-ZW)Z^*)\\
	&=&\Re\,\trace\, (W(ZZ^*-Z^*Z))\\
	&=&0.
\end{eqnarray*}

If $Z\in\g$ is a left-invariant vector field on $G$ and $\phi:U\to\cn$ is a local complex-valued function on $G$ then the $k$-th order derivatives $Z^k(\phi)$ satisfy
\begin{equation*}\label{equation-diff-Z}
	Z^k(\phi)(p)=\frac {d^k}{ds^k}\bigl(\phi(p\cdot\exp(sZ))\bigr)\Big|_{s=0}.
\end{equation*}
\smallskip 

\noindent
This implies that the tension field $\tau$ and the conformality operator $\kappa$ on $G$ fulfill 
\begin{equation*}\label{equation-tau}
	\tau(\phi)
	=\sum_{Z\in\B_\g}\bigl(Z^2(\phi)-\nab ZZ(\phi)\bigr)
	=\sum_{Z\in\B_\g}Z^2(\phi),
\end{equation*}	
\begin{equation*}\label{equation-kappa}
	\kappa(\phi,\psi)=\sum_{Z\in\B_\g}Z(\phi)\cdot Z(\psi),
\end{equation*}
where $\B_\g$ is the orthonormal basis $\B^\cn\cap\g$ for the Lie algebra $\g$.

\begin{lemma}\label{lemma-regular-orthogonal}
Let $G$ be a classical Lie subgroup of $\GLC n$ with Lie algebra $\g$ generated by a subset $\B_{\g}$ of the orthonormal basis $\B^\cn$ for $\glc n$. Let $\psi:\C^{n\times n} \to \C$ be given by
$$
\psi(x) = \trace (ab^txBx^t),$$
where $a,b\in\C^n$ and $B\in\GLC n$ is either symmetric or skew-symmetric. Finally, let $\phi = \psi|_G$ be the restriction of $\psi$ to $G$. Then a point $x\in G$ satisfies
\begin{enumerate}
\item  $\phi(x) = 0$ if and only if $(Bx^tb, x^ta) = 0$ where $(\cdot, \cdot)$ is the standard bilinear form on $\C^n$, and 
\item $d\phi(x) = 0$ if and only if the matrix
$$
Bx^tab^tx\in (\g^\C)^\perp\subseteq \C^{n\times n}
$$
i.e. it lies in the orthogonal complement of the complexification of $\g$ with respect to the standard Euclidean inner product $\langle\cdot,\cdot\rangle$.
\end{enumerate}
\end{lemma}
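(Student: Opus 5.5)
Part (1) follows from cyclicity of the trace. Since $ab^t$ has rank at most one,
$$
\psi(x)=\trace(ab^txBx^t)=b^txBx^ta .
$$
This is the scalar $(x^tb)^t B (x^ta)$, i.e. the bilinear pairing of $Bx^tb$-type expressions with $x^ta$. If $B$ is symmetric or skew-symmetric, transposing the scalar gives
$$
b^txBx^ta=a^txB^tx^tb=\pm\, a^txBx^tb .
$$
Either expression can therefore be rewritten as $(Bx^tb,x^ta)$ up to sign and transposition. Because $B$ is invertible and the sign is harmless, $\phi(x)=0$ if and only if $(Bx^tb,x^ta)=0$. I would state the identity carefully with the convention $(u,v)=u^tv$ and check the index bookkeeping once, including which transpose is involved.

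For (2) I would compute the differential using the left-invariant frame. Since $\B_\g$ spans $\g$ and $\{x\,Z:\ Z\in\B_\g\}$ spans $T_xG$, we have $d\phi(x)=0$ if and only if $Z(\phi)(x)=0$ for every $Z\in\B_\g$. Then, for $Z\in\B_\g$,
$$
Z(\phi)(x)=\frac{d}{ds}\,\trace\bigl(ab^tx e^{sZ}B e^{sZ^t}x^t\bigr)\Big|_{s=0}
=\trace\bigl(ab^txZBx^t\bigr)+\trace\bigl(ab^txBZ^tx^t\bigr).
$$
Cyclicity turns the first term into $\trace(Z\,Bx^tab^tx)$. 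Transposing the second term gives $\trace(x Z B^t x^t b a^t)$, and using $B^t=\pm B$ this becomes $\pm\trace(Z\,Bx^tba^tx)$.

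Setting $M=Bx^tab^tx$, the $\pm$ sign and transposition should let me identify the second term with the first, so that $Z(\phi)(x)=2\,\trace(ZM)$ or the analogous expression. This is the step I expect to be the main obstacle: the relation between $Bx^tba^tx$ and $M$ (with the sign from $B$) has to be pinned down exactly, and I would check it directly. If the two terms do not simply merge, I would instead pair them using $\trace(ZM)=\trace(Z^tM^t)$, since the standard bases $\B_\g$ here are closed under transposition up to sign.

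Finally I would convert the trace condition into orthogonality. We have $\trace(ZM)=\langle M, \bar Z^t\rangle$-type pairings, where $\langle U,V\rangle=\trace(UV^*)$. Because the elements of $\B^\cn$ are real or purely imaginary multiples of real matrices that are symmetric or skew, $\bar Z^t=\pm Z$. The vanishing of $\trace(ZM)$ for all $Z\in\B_\g$ is then equivalent to the vanishing of the complex-bilinear pairing of $M$ with every element of $\g$. By complex linearity this extends to all of $\g^\C=\g\oplus i\g$, and it is equivalent to $\langle M,W\rangle=0$ for all $W\in\g^\C$, i.e. $M\in(\g^\C)^\perp$. The needed fact is that $\g^\C$ is closed under conjugate transpose, which holds because $\B_\g\subseteq\B^\cn$ consists of Hermitian or skew-Hermitian elements.
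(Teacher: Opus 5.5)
Your part (1) is fine. So is your final paragraph: every element of $\B_{\g}$ is Hermitian or skew-Hermitian, so $\g^{\C}$ is closed under $W\mapsto W^*$. For a complex subspace the real and Hermitian orthogonal complements coincide. Hence $\trace(ZN)=0$ for all $Z\in\B_{\g}$ is equivalent to $N\in(\g^{\C})^\perp$, in both directions, for whatever matrix $N$ actually appears. The gap is exactly the step you flagged, and it cannot be closed, because (2) is false as stated. Writing $B^t=\epsilon B$, your own computation gives
\[
Z(\phi)(x)=\trace\bigl(Z\,Bx^t(ab^t+\epsilon\, ba^t)x\bigr),
\]
and $Bx^tba^tx$ is in general unrelated to $M=Bx^tab^tx$. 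For a concrete counterexample, take $G=\SO n$, $B=I_n$, $a=e_1$, $b=e_2$. Then $\phi(x)=e_2^txx^te_1=0$ on all of $\SO n$, so $d\phi\equiv 0$. Yet at $x=I_n$ we get $M=E_{12}$, which is not symmetric and hence not in $(\soC n)^\perp$, the space of complex symmetric matrices. Your fallback does not rescue this: for $B=I_n$ it gives $Z(\phi)(x)=(1+s_Z)\trace(ZM)$ with $Z^t=s_ZZ$, so the skew-symmetric elements of $\B_{\g}$ impose no condition at all.

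What your argument does prove is the corrected statement: $d\phi(x)=0$ if and only if $Bx^tA_\epsilon x\in(\g^{\C})^\perp$, where $A_\epsilon=\tfrac12(ab^t+\epsilon\,ba^t)$. Since $B$ and $x$ are invertible, this matrix equals $Bx^tab^tx$ exactly when $ab^t=\epsilon\,ba^t$. For $a,b\neq0$ that means $B$ is symmetric and $b\in\C a$, which covers the $\SLR n$ and $\SpR n$ applications ($a=b$, $B=I$). The paper's proof crosses the same step by asserting $\trace(Bx^t(A\pm A^t)xX)=2\,\trace(Bx^tAxX)$, on the grounds that $\psi$ depends only on the $\epsilon$-symmetric part of $A$. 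That observation justifies replacing $A$ by $A_\epsilon$ throughout, which yields $2\,\trace(Bx^tA_\epsilon xX)$, not the displayed identity. So do not try to reproduce that step. For $B=J_n$ (the $\SOs{2n}$ and $\SUs{2n}$ cases), the matrix that must lie in $(\g^{\C})^\perp$ is the rank-two matrix $J_nz^t(ab^t-ba^t)z$, not $J_nz^tab^tz$.
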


\begin{proof}
The first statement follows directly from the fact that $B$ is either symmetric or skew-symmetric and thus
$$
\phi(x) = \trace (ab^txBx^t) = \pm\,\trace (x^ta(Bx^tb)^t) = \pm\,(x^ta, Bx^tb).
$$

For the second statement, the differential $d\phi(x)$ vanishes if and only if we have $X(\phi)(x) = 0$ for all $X\in\g$. Let $X\in \g$, then
\begin{eqnarray*}
X(\phi)(x) 
&=& \frac{d}{dt}\trace (A(x\cdot\exp(tX)) B (x\cdot\exp(tX))^t)|_{t=0} \\
&=& \trace (AxXBx^t) + \trace (AxB(xX)^t)\\
&=& \trace (AxXBx^t) + \trace (xXB^tx^tA^t) \\
&=& \trace (Bx^tAx\cdot X) \pm \trace (Bx^tA^tx\cdot X) \\
&=& \trace (Bx^t(A\pm A^t)x\cdot X)\\
&=& 2\,\trace (Bx^tAx\cdot X).
\end{eqnarray*}
In the last line we use the fact that the expression $\trace(AxBx^t)$ depends only on the (skew) symmetric part of $A$ owing to the (skew) symmetry of $B$.

Notice that $\B_{\g}$ consists of matrices which are either completely real or completely imaginary, as well as being symmetric or skew-symmetric. Suppose first that $X\in\B_{\g}$ is real, then
\begin{eqnarray*}
\trace (Bx^tAx\cdot X) 
&=& \Re\,\trace (Bx^tAx\cdot X) + i\,\Im\,\trace (Bx^tAx\cdot X)\\
&=& \pm\,\Re\,\trace (Bx^tAx\cdot X^t) \pm\, i\,\Re\,\trace (Bx^tAx\cdot iX^t)\\
&=& \pm\,\Re\,\trace(Bx^tAx\cdot X^*) \pm\, i\,\Re\,\trace (Bx^tAx\cdot (iX)^*)\\
&=& \pm\,\langle Bx^tAx, X\rangle\pm\, i\langle Bx^tAx, iX\rangle,
\end{eqnarray*}
and this vanishes if and only if 
$$
\langle Bx^tAx, X\rangle = \langle Bx^tAx, iX\rangle = 0.
$$
Similarly, if $X\in\B_{\g}$ is purely imaginary then $\trace (Bx^tAx\cdot X) = 0$ if and only if
$$
\langle Bx^tAx, X\rangle = \langle Bx^tAx, iX\rangle = 0.
$$
As a consequence, $d\phi(x) = 0$ implies that
$$
\langle Bx^tAx, Z\rangle = 0
$$
for all $Z\in\g^\cn$.
\end{proof}

%%%%%%%%%%%%%%%%%%%%%%%%%%%%%%%%%%%%%%%%%%%
\section{The Symmetric Space $\SLR n/\SO n$}
\label{section-SUn-SOn}
%%%%%%%%%%%%%%%%%%%%%%%%%%%%%%%%%%%%%%%%%%%

The purpose of this section is to prove Theorem \ref{theorem-SLRSO} and thereby construct a new multidimensional family of complete minimal submanifolds of the homogeneous quotient manifold $\SLR n/\SO n$.  This  carries the structure of a non-compact Riemannian symmetric space. It is well-known that the natural projection $\pi:\SLR n\to\SLR n/\SO n$ is a Riemannian submersion.  This means that we can apply Corollary \ref{corollary-lift-eigen} in this situation.
\smallskip

The non-compact special linear group $\SLR n$ is given by
$$\SLR{n}=\{x\in\GLR{n}\,|\,\det x =1\}.$$
The Lie algebra $\slr n$ of $\SLR n$ is the set of real  traceless matrices
$$
\slr n=\{X\in\glr n\, |\, \trace X=0\}.
$$
The special orthogonal group $\SO n$ is the maximal compact subgroup of $\SLR n$ given by
$$
\SO{n}=\{x\in\SLR{n}\, |\, x\cdot x^t=I_n\}.
$$ The Lie algebra $\so n$ of $\SO n$ is the set of real  skew-symmetric matrices
$$
\so n=\{X\in\glr n\, |\, X+X^t=0\}.
$$ 
For the Lie algebra $\slr n$ we have the orthogonal decomposition 
$$
\slr n=\so n\oplus \p,
$$
where the subspace $\p$ satisfies 
$\p=\{ X\in\slr n\,|\, X^t=X\}$.

\begin{proposition}\label{proposition-eigenfamily_slr}
For a non-zero element $a\in\C^n$ let the complex-valued function $\phi_a:\SLR n/\SO n\to\C$ be defined by
$$
\phi_a(x\cdot\SO n) = \trace(aa^txx^t).
$$ 
Then $\phi_a$ is a well-defined eigenfunction with
$$
\lambda = 2\cdot\frac{n^2+n-2}{n}\ \ \text{and}\ \ \mu = 4\cdot\frac{n-1}{n}.
$$
\end{proposition}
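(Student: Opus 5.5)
Well-definedness is immediate. If $k\in\SO n$, then $(xk)(xk)^t=xx^t$, so $\phi_a$ depends only on the coset $x\cdot\SO n$. By Corollary \ref{corollary-lift-eigen}, applied to the Riemannian submersion $\pi:\SLR n\to\SLR n/\SO n$ (harmonic, since its fibres are totally geodesic orbits of $\SO n$), it suffices to show that the lift
$$\hat\phi(x)=\trace(aa^txx^t)$$
is a $(\lambda,\mu)$-eigenfunction on $\SLR n$ with the stated constants. There are two routes. One could compute directly on $\SLR n$ with the formulas $\tau(\phi)=\sum_{Z\in\B_\g}Z^2(\phi)$ and $\kappa(\phi,\psi)=\sum Z(\phi)Z(\psi)$ over $\B_{\slr n}$. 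Alternatively, one could invoke duality (Corollary \ref{corollary-dual-eigenfamilies}) with the compact dual $\SU n/\SO n$: the function $z\mapsto \trace(aa^tzz^t)$ on $\SU n$ is exactly the type of eigenfunction in Table \ref{table-eigenfamilies}, with $(\lambda,\mu)=(-2(n^2+n-2)/n,\,-4(n-1)/n)$. Negating these gives precisely the claimed values, since $\phi^\C(z)=\trace(aa^tzz^t)$ is holomorphic on $\SLC n$ and restricts to both real forms. I would present the direct computation, both as a check and because it is short.

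The direct computation goes as follows. Write $A=aa^t$, which is symmetric. For $Z\in\slr n$ we have
$$Z(\hat\phi)(x)=\trace(Ax(Z+Z^t)x^t),\qquad Z^2(\hat\phi)(x)=\trace\bigl(Ax(Z^2+2ZZ^t+(Z^t)^2)x^t\bigr).$$
The basis $\B_{\slr n}$ consists of the $Y_{rs}$, the $X_{rs}$, and an orthonormal basis of traceless diagonal matrices. The $Y_{rs}$ are skew, so they contribute nothing to either operator. For the symmetric elements, $Z+Z^t=2Z$ and $Z^2+2ZZ^t+(Z^t)^2=4Z^2$.

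For $\tau$, the standard identities are $\sum_{r<s}X_{rs}^2=\tfrac{n-1}{2}I$ and $\sum_{\text{diag}}H^2=\tfrac{n-1}{n}I$. Hence $\sum 4Z^2=4\bigl(\tfrac{n-1}{2}+\tfrac{n-1}{n}\bigr)I=\tfrac{2(n^2+n-2)}{n}I$, and therefore $\tau(\hat\phi)=\tfrac{2(n^2+n-2)}{n}\,\hat\phi$.

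For $\kappa$, set $u=x^ta$. Then
$$\kappa(\hat\phi,\hat\phi)=4\sum_Z\trace(AxZx^t)^2=4\sum_Z(u^tZu)^2.$$
The sum $\sum_Z (u^tZu)^2$ over an orthonormal basis of traceless symmetric matrices equals the squared norm of the projection of $uu^t$ onto traceless symmetric matrices, taken with the complex bilinear pairing. This projection is $uu^t-\tfrac{(u,u)}{n}I$, and the pairing gives $(u,u)^2-\tfrac{(u,u)^2}{n}=\tfrac{n-1}{n}(u,u)^2$. Since $(u,u)=a^txx^ta=\hat\phi(x)$, we obtain $\kappa(\hat\phi,\hat\phi)=\tfrac{4(n-1)}{n}\hat\phi^2$.

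The main obstacle is this $\kappa$ step: the projection identity must be carried out with the complex bilinear extension rather than the Hermitian product, since $u$ is complex. The safest check is to expand directly in coordinates, $\sum_{r<s}2u_r^2u_s^2+\sum_H(\sum_t H_{tt}u_t^2)^2$, and verify that the result is $\tfrac{n-1}{n}(\sum_t u_t^2)^2$.
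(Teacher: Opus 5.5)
Your proof is correct, and the constants check out. The identities $\sum_{r<s}X_{rs}^2=\tfrac{n-1}{2}I$ and $\sum_H H^2=\tfrac{n-1}{n}I$ give $\tau(\hat\phi)=\tfrac{2(n^2+n-2)}{n}\hat\phi$. The complex-bilinear projection step is also legitimate: $M\mapsto\sum_Z\langle Z,M\rangle^2$ is the complex-bilinear extension of the real quadratic form $M\mapsto|PM|^2$. Hence $\kappa(\hat\phi,\hat\phi)=4\bigl((u,u)^2-\tfrac1n(u,u)^2\bigr)$ with $u=x^ta$, as your coordinate check confirms.

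The paper's proof is exactly the alternative you mention and set aside. It is a one-line appeal to Proposition 4.1 of \cite{Gud-Sif-Sob-2} on the compact dual $\SU n/\SO n$, combined with the sign flip of Theorem \ref{theorem-duality}. That route is shorter and uniform across all four spaces treated in the paper. Its cost is that all the content is outsourced to external results.

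Your direct computation is self-contained, and it shows clearly why the $\so n$-directions drop out, namely right $\SO n$-invariance. It also exhibits the mechanism behind the duality theorem in this instance. On $\SU n$ the symmetric directions are $iX$ rather than $X$, so every $Z^2(\hat\phi)$ and every $Z(\hat\phi)Z(\hat\phi)$ term picks up a factor $-1$.

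Your computation also repairs a small imprecision in the paper, which asserts that $\B^\C\cap\g$ is an orthonormal basis of $\g$. For $\slr n$ this fails, since the $D_t$ are not traceless. Your choice of an orthonormal basis of traceless diagonal matrices is the right fix: these matrices are normal, so the Koszul argument still gives $\nabla_HH=0$.
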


\begin{proof}
This is a direct consequence of Proposition 4.1 of \cite{Gud-Sif-Sob-2} and the duality presented in Theorem \ref{theorem-duality}.
\end{proof}

We will now prove our first main result formulated in Theorem \ref{theorem-SLRSO}.

\begin{proof}{(Theorem \ref{theorem-SLRSO})}
We begin by lifting $\phi$ to the $\SO n$-invariant map $\hat\phi:\SLR n \to \cn$ in the obvious way, namely
$$\hat\phi(x) = \trace (aa^txx^t).$$
Here we see that $G = \SLR n$ and $\hat\phi$ satisfy the conditions of Lemma \ref{lemma-regular-orthogonal} with $a = b$ and $B = I_n$. By assumption, we have
$$a = a_1 + ia_2,$$
where $a_1,a_2\in\R^n$ are linearly independent, so we can form a basis 
$$
\{a_1,a_2,\dots, a_n\}
$$ for $\R^n$. Since $n > 2$ we have at least one degree of freedom with which to ensure that the matrix
$$
y = (a_1,a_2,\dots,a_n) \in \SLR n
$$
has determinant $1$. Then letting
$$
x = (y\inv)^t \in \SLR n
$$
we have
$$
x^ta = y\inv a_1 + iy\inv a_2 = e_1 + ie_2,
$$
which satisfies
$$
(x^ta, x^ta) = (e_1,e_1) - (e_2, e_2) = 0.
$$
Hence, the fiber $\phi^{-1}(\{0\})$ is non-empty by Lemma \ref{lemma-regular-orthogonal}.
\smallskip

Let us now assume that $x\in\phi^{-1}(\{0\})$ is a critical point. The complexification of the real Lie algebra $\slr n$ is $\slc n$ whose orthogonal complement in $\glc n$ is the complex one dimensional subspace spanned by the identity matrix $I_n$. Again using Lemma \ref{lemma-regular-orthogonal}, we see that $x$ is singular if and only if
$$x^taa^tx = \alpha\cdot I_n$$
for some non-zero $\alpha\in \C$. However, the matrix $x^taa^tx$ always has rank $1$, whereas $\alpha\cdot I_n$ has rank $n > 2$. This gives us a contradiction. 
The result now follows from Theorem \ref{theorem-Gud-Mun-1}.

We also remark that for $n\geq3$, the linear independence condition on $a_1,a_2$ is necessary and sufficient. This is because if $x^ta$ is isotropic then in particular $x^ta_1$ and $x^ta_2$ are linearly independent, which implies that $a_1,a_2$ must have been so. 
\end{proof}

\begin{example}
 Let us now consider a particular choice of $\phi$ in the case when $n=3$. The Riemannian symmetric space $\SLR 3/\SO 3$ is a 5-dimensional manifold, so we are expecting Theorem \ref{theorem-SLRSO} to result in 3-dimensional complete minimal submanifolds.
   
Let $a^t = (1,i,0)$ and define the map $\phi:\SLR 3/\SO 3\to\C$ by
\begin{eqnarray*}
& &\phi(x\cdot\SO 3) \\
&=& \trace(aa^txx^t)\\
&=& x_{11}^2 + x_{12}^2 + x_{13}^2 + 2i(x_{11}x_{21} + x_{12}x_{22} + x_{13}x_{33}) - x_{21}^2 - x_{22}^2 - x_{23}^2\\
&=& \|x_1\|^2-\|x_2\|^2 + 2i\,\langle x_1, x_2\rangle,
\end{eqnarray*} 
where $x_1,x_2,x_3$ are the rows of $x$. Then $\phi(x\cdot\SO 3)$ is zero if and only if the first two rows of $x$ are orthogonal and of equal length, or equivalently $x_1+ix_2\in\C^3$ is isotropic. By Theorem \ref{theorem-SLRSO}, the preimage  
$$\phi^{-1}(\{0\}) = \{x\cdot \SO 3 \ |\ (x_1 + ix_2)\in\cn^3 \ \text{is isotropic}\}$$  
is a complete minimal submanifold of $\SLR 3/\SO 3$. The following provides a better picture of its geometry.

    Given a coset $x\cdot\SO 3\in\phi^{-1}(\{0\})$, we can define a canonical representative $\tilde x$ as follows. Let $u = \|x_1\| = \|x_2\| \in \R^+$ be the shared length of the first two rows of $x$, and put $y_1 = x_1/u$ and $y_2 = x_2/u$. Then $\langle y_1, y_2\rangle = u^{-2}\langle x_1, x_2\rangle = 0$. There is a unique $y_3\in\R^3$ such that $\{y_1,y_2,y_3\}$ is an oriented orthonormal basis for $\R^3$. Put $y = (y_1^t,y_2^t,y_3^t)\in\SO 3$. Then, using the fact that $\det(\tilde x) = 1$, we obtain 
$$\tilde x = x\cdot y = \left(\begin{array}{ccc}
        x_1\cdot y_1^t & x_1\cdot y_2^t & x_1\cdot y_3^t \\
        x_2\cdot y_1^t & x_2\cdot y_2^t & x_2\cdot y_3^t \\
        * & * & *
\end{array}\right) =\left(\begin{array}{ccc}
        u & 0 & 0 \\
        0 & u & 0 \\
        v & w & u^{-2}
\end{array}\right)$$
for some $v,w\in \R$. The uniqueness of $y$ shows that each coset has a unique representative of this form. Hence, the parametrisation $\R^+\times\R^2 \to \phi^{-1}(\{0\})$ defined by
$$
(u,v,w) \mapsto \left(\begin{array}{ccc}
        u & 0 & 0 \\
        0 & u & 0 \\
        v & w & u^{-2}
\end{array}\right)\cdot\SO3
$$
is bijective. Given that the matrix $y$ depends smoothly on $x$ this is in fact a diffeomorphism. Thus, $\phi^{-1}(\{0\})$ is a minimal submanifold of $\SLR 3/\SO 3$ diffeomorphic to $\R^3$.
\end{example}

%%%%%%%%%%%%%%%%%%%%%%%%%%%%%%%%%%%%%%%%%%%
\section{The Symmetric Space $\SpR{n}/\U n$}
\label{section-Spn-Un}
%%%%%%%%%%%%%%%%%%%%%%%%%%%%%%%%%%%%%%%%%%%

In this section we prove Theorem \ref{theorem-SpRU}, which yields a new multidimensional family of complete minimal submanifolds of the non-compact Riemannian symmetric space $\SpR{n}/\U n$. 
\smallskip 

The real symplectic group $\SpR n$ is defined with
$$\SpR n = \{x\in\GLR {2n}\, |\, xJ_nx^t = J_n\},$$
where
$$
J_n = \left(\begin{array}{cc}
0 & I_n \\
-I_n & 0 
\end{array}\right)
$$
defines the standard skew-symmetric form on $\C^{2n}$. The Lie algebra $\spR n$ of $\SpR n$ is given by
$$
\spR n=\{X\in\rn^{2n\times 2n}\,|\,XJ_n+J_nX^t=0\}.
$$
The unitary group $\U n=\{z=x+iy\in\cn^{n\times n}\,|\,z\cdot z^*\}$ can be embedded into the real symplectic group $\SpR n$ as its maximal compact subgroup by 
$$
z=x+iy \mapsto \left(\begin{array}{cc}
x & y \\
-y & x
\end{array}\right).$$

For the Lie algebra $\spR n$ we have the orthogonal decomposition
$$\spR n = \u n\oplus\p,$$
where the orthonormal basis $\B_{\u n}$ for the subalgebra $\u n$ is given by
\begin{eqnarray*}
\tfrac{1}{\sqrt2}\left(\begin{array}{cc}
Y_{rs} & 0 \\
0 & Y_{rs}
\end{array}\right),\ \
\tfrac{1}{\sqrt2}\left(\begin{array}{cc}
0 & X_{rs} \\
-X_{rs} & 0
\end{array}\right),\ \ 
\tfrac{1}{\sqrt2}\left(\begin{array}{cc}
0 & D_t \\
-D_t & 0
\end{array}\right).
\end{eqnarray*}
The orthonormal basis $\B_{\p}$ for the orthogonal complement $\p$ is generated by
$$\tfrac{1}{\sqrt2}\left(\begin{array}{cc}
X_{rs} & 0 \\
0 & -X_{rs}
\end{array}\right),\ \ 
\tfrac{1}{\sqrt2}\left(\begin{array}{cc}
0 & X_{rs} \\
X_{rs} & 0
\end{array}\right),
$$
$$
\tfrac{1}{\sqrt2}\left(\begin{array}{cc}
D_t & 0 \\
0 & -D_t
\end{array}\right),\ \
\tfrac{1}{\sqrt2}\left(\begin{array}{cc}
0 & D_t \\
D_t & 0
\end{array}\right),$$
where $1\leq r < s \leq n$ and $1\leq t\leq n$.

\begin{proposition}\label{proposition-eigenfunctions-spr}
Let $a\in\C^{2n}$ be a non-zero vector. Then the complex-valued map $\phi_a:\SpR n/\U n\to\C$ given by
$$
\phi_a(x\cdot\U n) = \trace(aa^t xx^t)
$$
is an eigenfunction with
$$
\lambda = 2\cdot(n+1)\ \ \text{and}\ \ \mu = 2.
$$
\end{proposition}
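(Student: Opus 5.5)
The plan follows the proof of Proposition \ref{proposition-eigenfamily_slr}: we invoke the known eigenfamily on the compact dual $\Sp n/\U n$ and transfer it via Theorem \ref{theorem-duality}. Two preliminary points come first.

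\emph{Well-definedness.} For $k\in\U n\subset\SpR n$ the embedding lands in $\SO{2n}$, so $kk^t=I_{2n}$. Hence $\trace(aa^t xk(xk)^t)=\trace(aa^txx^t)$, and $\phi_a$ descends to the quotient. By Corollary \ref{corollary-lift-eigen} applied to the Riemannian submersion $\SpR n\to\SpR n/\U n$, it is enough to show that $\hat\phi_a(x)=\trace(aa^txx^t)$ is a $(2(n+1),2)$-eigenfunction on $\SpR n$.

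\emph{Duality.} The complexification of $\SpR n$ is $\mathbf{Sp}(n,\C)=\{z\in\GLC{2n}\,|\,zJ_nz^t=J_n\}$. The compact real form $\Sp n$ sits inside it as $\Sp n=\mathbf{Sp}(n,\C)\cap\U{2n}$, and both real forms share the maximal compact subgroup $\U n$. The polynomial $\phi^\C(z)=\trace(aa^tzz^t)$ on $\mathbf{Sp}(n,\C)$ is holomorphic. Its restriction to $\SpR n$ is $\hat\phi_a$, and its restriction to the compact form is $z\mapsto\trace(aa^tzz^t)$. I would check that this last function, in the embedding used in \cite{Gud-Sif-Sob-2}, is exactly the $\U n$-invariant eigenfunction on $\Sp n/\U n$ from Table \ref{table-eigenfamilies}, with $(\lambda,\mu)=(-2(n+1),-2)$. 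If the model of $\Sp n$ used there is the quaternionic one, a fixed unitary change of coordinates in $\C^{2n}$ identifies the two, and it carries the family $\{\trace(aa^tzz^t)\}$ to a family of the same form with a transformed $a$. Corollary \ref{corollary-dual-eigenfamilies} then gives the eigenvalues $(2(n+1),2)$ on $\SpR n/\U n$.

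\emph{Main obstacle.} The delicate step is matching conventions. The embedding of $\U n$ must agree on both sides, and the metric normalisation $g(Z,W)=\Re\trace(ZW^*)$ must match, since rescaling the metric rescales $\lambda,\mu$.

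\emph{Direct check.} Alternatively I would verify the eigenvalues directly, avoiding any convention mismatch. Using $\tau=\sum_{Z\in\B_\g}Z^2$ and $\kappa=\sum Z(\phi)Z(\psi)$ with the explicit orthonormal basis listed above, the derivative computed in Lemma \ref{lemma-regular-orthogonal} gives $Z(\hat\phi)=2\trace(x^taa^tx Z)$ and $Z^2(\hat\phi)=2\trace(x^taa^tx(Z^2+ZZ^t))$. Since $\trace(\cdot\,\cdot Z)$ only sees the symmetric part for symmetric $Z\in\p$, the skew $\u n$-directions contribute $0$ to $\tau$. Thus only the sums $\sum_{Z\in\B_\p}(Z^2+ZZ^t)=2\sum_{Z\in\B_\p} Z^2$ and $\sum_{Z\in\B_\p}\trace(AZ)\trace(BZ)$ are needed, the latter for symmetric $A,B$. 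For the $2n\times 2n$ basis these evaluate to $(n+1)I_{2n}$ and $\tfrac12\bigl(\trace(AB)+\trace(AJ_nBJ_n^t)\bigr)$ respectively. With $A=B=x^taa^tx$, the symplectic relation gives $\trace(AJ_nAJ_n^t)=(a^txJ_nx^ta)^2=(a^tJ_na)^2=0$. This yields $\tau=2(n+1)\hat\phi$ and $\kappa=2\hat\phi^2$, confirming the values of $\lambda$ and $\mu$.
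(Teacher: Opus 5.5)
Your proposal is correct. Its first route is exactly the paper's proof, which simply cites Proposition 4.2 of \cite{Gud-Sif-Sob-2} and applies Theorem \ref{theorem-duality}.

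The convention worry in that route is unnecessary. The connected subgroup of $\mathbf{Sp}(n,\C)$ with Lie algebra $\u n\oplus i\p$ is already the quaternionic model $\Sp n=\mathbf{Sp}(n,\C)\cap\U{2n}$. It contains $\U n$ as the same real matrices $\left(\begin{smallmatrix} x & y\\ -y & x\end{smallmatrix}\right)$, and $i\B_\p$ is orthonormal for the same metric. Note also that a general unitary change of coordinates would \emph{not} preserve the form $\trace(aa^tzz^t)$; only a real orthogonal one does.

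Your direct check is a genuinely different, self-contained route that the paper does not take. It replaces the black-box citation and the convention matching with a short computation on $\SpR n$, and it is correct in substance. Indeed $\sum_{Z\in\B_\p}Z^2=\tfrac{n+1}{2}I_{2n}$, which gives $\tau(\hat\phi)=2(n+1)\hat\phi$.

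There is one sign slip. On real symmetric matrices, $\p$ is the $(-1)$-eigenspace of $X\mapsto J_nXJ_n^t$, so the correct identity is
$\sum_{Z\in\B_\p}\trace(AZ)\trace(BZ)=\tfrac12\bigl(\trace(AB)-\trace(AJ_nBJ_n^t)\bigr)$.
To see your version fails, take $A=B=I_{2n}$: the left side vanishes because every $Z\in\B_\p$ is traceless, while your formula gives $2n$.

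The slip is harmless for your conclusion. With $M=x^taa^tx=ww^t$ and $w=x^ta$, the cross term is $\trace(MJ_nMJ_n^t)=(w^tJ_nw)(w^tJ_n^tw)$. This vanishes because $w^tJ_nw=0$ for every $w\in\C^{2n}$ by skew-symmetry of $J_n$ alone, so the symplectic relation is not even needed. Hence $\kappa(\hat\phi,\hat\phi)=4\sum_{Z\in\B_\p}\trace(MZ)^2=2\trace(M^2)=2\hat\phi^2$, as you claim.
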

\begin{proof}
This is a direct consequence of Proposition 4.2 of \cite{Gud-Sif-Sob-2} and the duality presented in Theorem \ref{theorem-duality}.
\end{proof}

\begin{lemma}\label{lemma-symmetric-transitive}
For any pair of vectors $u,v\in\R^n$ with $u\neq 0$ there exists a symmetric matrix $s\in\Sym(\R^{n\times n})$ such that $su = v$.
\end{lemma}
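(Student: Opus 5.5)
We need a symmetric $s$ with $su=v$ for any $u\neq 0$. The plan is to write down an explicit formula and then verify it directly; symmetry will be clear by construction, so the only real work is making sure the formula actually maps $u$ to $v$.

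Normalise by setting $w=u/|u|^2$, so that $\langle u,w\rangle=1$. The first candidate is
$$
s = v w^t + w v^t - \langle v,u\rangle\, w w^t .
$$
Each of the three terms is symmetric, hence so is $s$. Applying $s$ to $u$ gives
$$
su = v\,(w^tu) + w\,(v^tu) - \langle v,u\rangle\, w\,(w^tu).
$$
Since $w^tu=1$ and $v^tu=\langle v,u\rangle$, this equals $v + \langle v,u\rangle w - \langle v,u\rangle w = v$.

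So the construction is immediate. The only step needing care is the correction term $-\langle v,u\rangle ww^t$: it cancels the unwanted component produced by $wv^t$, and getting its coefficient right is what makes the identity hold.

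A conceptual alternative is also available. Complete $e_1=u/|u|$ to an orthonormal basis via an orthogonal $q$. In the new coordinates the condition becomes $s'e_1=v'/|u|$, where $s'=q^tsq$ and $v'=q^tv$. We then choose $s'$ with first column $v'/|u|$ and first row equal to its transpose. This is consistent because the $(1,1)$ entry is shared, and the remaining entries can be filled in arbitrarily but symmetrically, for instance with zeros. Conjugating back by $q$ preserves symmetry. I would present the explicit formula, since it avoids choosing a basis.
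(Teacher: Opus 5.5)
Your proof is correct, but it takes a different route from the paper's.

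The paper's argument proceeds in stages:
\begin{itemize}
\item It first disposes of $v=0$ by taking $s=0$.
\item It normalises to $|u|=|v|=1$ and handles $v=-u$ separately with $s=-I_n$.
\item Otherwise it uses the reflection-type matrix $s=-I_n+2ww^t$, where $w=(u+v)/\sqrt{2+2v^tu}$ is the unit bisector of $u$ and $v$.
\item Finally it rescales by $|v|/|u|$.
\end{itemize}

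Your formula $s=vw^t+wv^t-\langle v,u\rangle ww^t$ with $w=u/|u|^2$ needs no normalisation and no case split. It gives $s=0$ for $v=0$ and $s=-uu^t/|u|^2$ for $v=-u$ automatically, and it depends linearly on $v$.

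In exchange, the paper's $s$ is a scalar multiple of an orthogonal involution, hence invertible whenever $v\neq 0$. Yours has image in $\span\{v,w\}$, so it has rank at most two. However, the only application, Lemma \ref{lemma-spr-zero}, uses nothing beyond symmetry and $su=v$: the block matrices $S_1,S_2$ lie in $\SpR n$ for any symmetric $s$. So your shorter argument serves equally well there.

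Your alternative argument via an orthonormal basis adapted to $u$ is also valid.
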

\begin{proof}
If $v = 0$ then simply take $s = 0$. Otherwise, suppose that $|u| = |v| = 1$. If $u = -v$ then we can simply let $s=-I_n$, otherwise we have $v^tu > -1$ so we can define the unit vector 
$$
w = \frac{u + v}{\sqrt{2+2v^tu}}.
$$
Then the matrix
$$
s = -I_n + 2ww^t
$$
is symmetric and satisfies
$$
su = -u + \frac{u^tu+v^tu}{1+v^tu}(v+u) = v.
$$
For the general case, take $|u|\inv|v| s$ where $s$ is the matrix constructed as above from the normalised vectors $|u|\inv u$ and $|v|\inv v$.
\end{proof}

\begin{lemma}\label{lemma-spr-zero}
Let $n\geq 2$ and $a=a_1+ia_2\in\C^{2n}$ be such that $a_1,a_2\in\R^{2n}$ are linearly independent. Then there exists an element $x\in\SpR n$ such that the image $x^ta$ is isotropic.
\end{lemma}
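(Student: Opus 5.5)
The plan is to rephrase the problem as one about how $\SpR n$ acts on pairs of vectors. Since $\SpR n$ is closed under transposition ($xJ_nx^t=J_n$ implies $x^tJ_nx=J_n$), it suffices to find $y\in\SpR n$ such that $ya_1$ and $ya_2$ are orthogonal vectors of equal length in $\R^{2n}$. Then $x=y^t\in\SpR n$ satisfies
$$
(x^ta,x^ta)=|ya_1|^2-|ya_2|^2+2i\,\langle ya_1,ya_2\rangle=0.
$$
Write $e_1,\dots,e_n,f_1,\dots,f_n$ for the standard basis of $\R^{2n}$, where $f_j=e_{n+j}$, so that $\omega(u,v)=u^tJ_nv$ gives $\omega(e_j,f_k)=\delta_{jk}$ and $\omega(e_j,e_k)=\omega(f_j,f_k)=0$. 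This basis is orthonormal for the Euclidean inner product as well.

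The invariant of the pair $(a_1,a_2)$ under $\SpR n$ is $c=\omega(a_1,a_2)$, and I would split into two cases.

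\emph{Case $c\neq 0$.} Put $t=\sqrt{|c|}$ and $\varepsilon=\operatorname{sign}(c)$. Take the vectors $u_1=a_1/t$ and $u_2=\varepsilon a_2/t$, which satisfy $\omega(u_1,u_2)=1$. By the symplectic Gram--Schmidt procedure, extend $\{u_1,u_2\}$ to a symplectic basis $u_1,\dots,u_n,v_1,\dots,v_n$ with $v_1=u_2$. Let $y$ be the linear map sending $u_j\mapsto e_j$ and $v_j\mapsto f_j$. Then $y\in\SpR n$, and
$$
ya_1=t\,e_1,\qquad ya_2=\varepsilon t\,f_1,
$$
which are orthogonal and both of length $t$.

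\emph{Case $c=0$.} The span of $a_1,a_2$ is a $2$-dimensional isotropic subspace. Because $n\geq 2$, the same extension procedure applies: complete $u_1=a_1$, $u_2=a_2$ to a symplectic basis with $u_1,u_2$ among the "$e$-type" vectors. Let $y$ be the map sending $u_j\mapsto e_j$ and $v_j\mapsto f_j$. Then $y\in\SpR n$ with $ya_1=e_1$ and $ya_2=e_2$, which are orthonormal. This is the only place where $n\geq2$ is used.

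The main obstacle is justifying the extension step cleanly, i.e.\ a linear-symplectic Witt-type lemma: a pair with prescribed $\omega$-value extends to a symplectic basis. I would first try the standard inductive argument. Choose $v_1$ (respectively $v_1,v_2$ dual to $u_1,u_2$), which is possible by nondegeneracy of $\omega$. Pass to the $\omega$-orthogonal complement of the span built so far, which is again symplectic, and recurse.

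If a more hands-on argument in the spirit of Lemma \ref{lemma-symmetric-transitive} is preferred, I would instead use the explicit symplectic matrices
$$
\begin{pmatrix} I_n & 0\\ s & I_n\end{pmatrix},\qquad
\begin{pmatrix} I_n & s\\ 0 & I_n\end{pmatrix},\qquad
\begin{pmatrix} g & 0\\ 0 & (g^{t})^{-1}\end{pmatrix},\qquad
J_n,
$$
with $s$ symmetric and $g\in\GLR n$. Lemma \ref{lemma-symmetric-transitive} then lets one adjust the $f$-components of a vector freely while its $e$-component is nonzero. With these one moves $a_1$ to $te_1$, and then uses the stabiliser of $e_1$ to normalise $a_2$.
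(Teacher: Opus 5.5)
Your proof is correct, but it takes a different route from the paper's. The paper never introduces $\omega$. Writing $a_1=(b_1,b_2)$ and $a_2=(c_1,c_2)$, it kills $b_2$ with a lower shear $\begin{pmatrix} I_n & 0\\ s_1 & I_n\end{pmatrix}$, obtained from Lemma \ref{lemma-symmetric-transitive}. It then splits on whether the lower half $c_3$ of $S_1a_2$ vanishes. If $c_3=0$, a block-diagonal element $\mathrm{diag}(V^{-1},V^t)$ sends the pair to $(e_1,e_2)$. If $c_3\neq 0$, an upper shear clears $c_1$, leaving the automatically orthogonal pair $(b_1,0)$, $(0,c_3)$, and $\mathrm{diag}(\lambda I_n,\lambda^{-1}I_n)$ equalises their lengths. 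This is essentially your fallback sketch.

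Your main argument instead isolates the invariant $\omega(a_1,a_2)$ and uses a Witt/Darboux-type extension to reach the normal forms $(te_1,\pm tf_1)$ or $(e_1,e_2)$. This buys a shorter, conceptual proof that effectively classifies the $\SpR n$-orbits of independent pairs. It also makes clear that $n\geq 2$ is automatic in the case where you invoke it, since an isotropic plane already forces it. The paper's version, by contrast, is fully explicit and self-contained, needing only block matrices and Lemma \ref{lemma-symmetric-transitive}. Its case split is not the invariant one: in its second case the final pair has $\omega$-value $b_1^tc_3$, which may or may not vanish.

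One detail to get right when you write out the extension for $\omega(a_1,a_2)=0$. Vectors $v_1,v_2$ with $\omega(u_i,v_j)=\delta_{ij}$ must also satisfy $\omega(v_1,v_2)=0$, or $y$ is not symplectic. You can replace $v_2$ by $v_2+\omega(v_1,v_2)\,u_1$. Alternatively, choose $v_1$ with $\omega(u_1,v_1)=1$ and $\omega(u_2,v_1)=0$, then recurse in the $\omega$-complement of $\mathrm{span}\{u_1,v_1\}$, which contains $u_2$. Also, in the case $\omega(a_1,a_2)\neq 0$, call $\varepsilon a_2/t$ simply $v_1$, to avoid clashing with the basis vector $u_2$.
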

\begin{proof}
Note that the Lie group $\SpR n$ contains all matrices of the form
$$
D = \left(\begin{array}{cc}
d\inv & 0 \\
0 & d^t
\end{array}\right)
\quad\text{and}\quad 
S = \left(\begin{array}{cc}
I_n & s \\
0 & I_n
\end{array}\right),
$$
where $d\in\GLR n$ and $s\in\Sym(\R^{n\times n})$ are arbitrary. See Proposition 1.1.1 in \cite{Hab-Hab}.

    We write
    $$a = b + ic = \left(\begin{array}{c}
         b_1\\
         b_2
    \end{array}\right) + i\left(\begin{array}{c}
         c_1\\
         c_2
    \end{array}\right),
$$
where $b_1, b_2, c_1, c_2\in\R^n$. We may assume that $b_1\neq 0$ (otherwise $b_2\neq 0$ and a symmetric argument applies). Then by Lemma \ref{lemma-symmetric-transitive} there is some symmetric matrix $s_1\in\Sym_n(\R)$ satisfying $s_1b_1 = -b_2$. The block matrix
$$
S_1 = \left(\begin{array}{cc}
I_n & 0 \\
s_1 & I_n
\end{array}\right)\in\SpR n
$$
satisfies
$$
S_1b = \left(\begin{array}{c}
b_1\\
s_1b_1 + b_2
\end{array}\right)
 = \left(\begin{array}{c}
b_1\\
0
\end{array}\right),
\quad S_1c = \left(\begin{array}{c}
c_1\\
s_1c_1 + c_2
\end{array}\right) = 
\left(\begin{array}{c}
c_1\\
c_3
\end{array}\right) .
$$
If $c_3=0$ then $b_1$ and $c_1$ must be linearly independent, and thus they can be extended to a basis
$$
\{v_1 = b_1, v_2 = c_1, v_3, \dots, v_n\}
$$
for $\rn^n$, forming a matrix
$$
V = (v_1,\dots,v_n)\in\GLR n.
$$
Then the element
$$
D_1 = \left(\begin{array}{cc}
V\inv & 0 \\
0 & V^t\end{array}\right)\in\SpR n
$$
allows us to construct $x = (D_1S_1)^t\in\SpR n$ which has
$$
x^tb = \left(\begin{array}{c}
V\inv b_1\\
0
\end{array}\right) 
= \left(\begin{array}{c}
e_1\\
0
\end{array}\right),\quad 
x^tc = \left(\begin{array}{c}
V\inv c_1\\
0
\end{array}\right) = \left(\begin{array}{c}
e_2\\
0
\end{array}\right).$$
As a consequence $x^ta = x^tb+ix^tc = e_1 + ie_2$ is isotropic.
\smallskip
    
If instead $c_3\neq 0$, let $s_2\in\Sym(\R^{n\times n})$ be such that $s_2c_3 = -c_1$ and put 
$$
S_2 = \left(\begin{array}{cc}
        I_n & s_2 \\
        0 & I_n
    \end{array}\right)\in\SpR n,$$
    so that
    $$S_2S_1b = \left(\begin{array}{c}
         b_1\\
         0
    \end{array}\right),\ S_2S_1c = \left(\begin{array}{c}
         0\\
         c_3
    \end{array}\right).$$
    Now let $\lambda = \sqrt{|b_1|\inv|c_3|}$, put
    $$D_2 = \left(\begin{array}{cc}
        \lambda I_n & 0 \\
        0 & \lambda\inv I_n
    \end{array}\right)\in\SpR n$$
    and define $x = (D_2S_2S_1)^t\in\SpR n$. Then we have
    $$\langle x^tb, x^tc\rangle = \left\langle \left(\begin{array}{c}
         \lambda b_1\\
         0
    \end{array}\right), \left(\begin{array}{c}
         0\\
         \lambda\inv c_3
    \end{array}\right)\right\rangle = 0.$$
This implies that 
$$
|x^tb| = \lambda|b_1| = \sqrt{|b_1|\cdot|c_3|} = \lambda\inv |c_3| = |x^tc|
$$
so that
\begin{eqnarray*}
(x^ta, x^ta) 
&=& (x^tb + ix^tc, x^tb + ix^tc)\\
&=& |x^tb|^2 - |x^tc|^2 + 2i\langle x^tb, x^tc\rangle\\
&=& 0,
\end{eqnarray*}
in other words $x^ta$ is isotropic.
\end{proof}

After our preparations in Lemmas \ref{lemma-symmetric-transitive} and \ref{lemma-spr-zero} we are now ready to prove Theorem \ref{theorem-SpRU}.

\begin{proof}{(Theorem \ref{theorem-SpRU})}
Observe that the lift $\hat\phi:\SpR n \to \cn$ of $\phi$ satisfies the conditions of Lemma \ref{lemma-regular-orthogonal} with $a = b$ and $B = I_{2n}$. Thus, $x\in\SpR n$ is a zero of $\hat\phi$ if and only if $(x^ta, x^ta) = 0$ i.e. $x^ta\in\C^n$ is isotropic. The existence of zeros is then ensured by Lemma \ref{lemma-spr-zero}.

Let us now assume that $x\in\phi\inv(\{0\})$ satisfies $d\phi(x) = 0$. The complexification of $\spR n$ is simply $\spC n$, so by Lemma \ref{lemma-regular-orthogonal} we must have
$$
x^taa^tx\in\spC n^\perp.
$$
By examining the basis $\B_{\spC n}$ for $\spC n$ and using the fact that the matrix $x^taa^tx$ is symmetric, this forces it to be of the form
$$
x^taa^tx = 
\left(\begin{array}{cc}
U & V \\
-V & U 
\end{array}\right),
$$
where $U$ is symmetric and $V$ is skew-symmetric. Write
$x^ta = (b_1, b_2)^t$ with $b_1,b_2\in\C^n$ and observe that $U = b_1b_1^t = b_2b_2^t$ and $V = b_1b_2^t$. This means that $V$ is of rank at most 1. However, since every skew-symmetric matrix has even rank, this implies that $V = 0$ and thus $a = 0$, contradicting our assumptions. The result now follows from Theorem \ref{theorem-Gud-Mun-1}.
\end{proof}

\begin{example}
Let $n=2$ and consider the 6-dimensional Riemannian symmetric space $\SpR 2/\U 2$. We will now apply Theorem \ref{theorem-SpRU} in order to construct a complete four dimensional  minimal submanifold of $\SpR 2/\U 2$.

Take $a = e_1 + ie_2\in\C^{4}$. This satisfies the conditions of the theorem, and thus we have an eigenfunction $\hat\phi:\SpR 2 \to \C$ given by
$$
\hat\phi(x) = \trace(aa^txx^t).
$$
We now have $\hat\phi(x) = 0$ if and only if
$$
(x^ta, x^ta) = (x^te_1 + ix^te_2, x^te_1+ix^te_2) = (x_1+ix_2, x_1+ix_2) = 0
$$
i.e. $x_1+ix_2$ is an isotropic element, where $x_j$ denotes the $j$-th row of the matrix $x\in\SpR 2$. Thus, we have the complete four dimensional minimal submanifold
$$
\{x\cdot \U 2\, |\, x\in\SpR 2,\ |x_1|=|x_2|,\ \langle x_1,x_2\rangle = 0\}
$$
of $\SpR 2/\U 2$.
\end{example}

%%%%%%%%%%%%%%%%%%%%%%%%%%%%%%%%%%%%%%%%%%%
\section{The Symmetric Space $\SOs{2n}/\U n$}
\label{section-SO2n-Un}
%%%%%%%%%%%%%%%%%%%%%%%%%%%%%%%%%%%%%%%%%%%

The goal of this section is to prove Theorem \ref{theorem-SOsU}, which immediately provides us with a new multidimensional family of complete minimal submanifolds of the non-compact Riemannian symmetric space $\SOs{2n}/\U n$.
\smallskip 

The classical Lie group $\SOs{2n}$ is defined by
$$
\SOs{2n}\ = \{z\in\SOC{2n}\, |\, \bar zJ_nz^t = J_n\},
$$
where
$$
J_n = \left(\begin{array}{cc}
    0 & I_n \\
    -I_n & 0
\end{array}\right).
$$
The Lie algebra $\sos{2n}$ of the group $\SOs{2n}$ satisfies 
$$
\sos{2n} = \left\{\left(\begin{array}{cc}
    Z & W \\
    -\bar W & \bar Z
\end{array}\right)\in\C^{2n\times 2n}\, \Bigg|\, Z+Z^* = W-W^*=0\right\}.
$$ 
The unitary group $\U n$ appears as a maximal compact subgroup of $\SOs{2n}$ via the embedding
$$z=x+iy \mapsto \left(\begin{array}{cc}
    x & y \\
    -y & x
\end{array}\right)$$
and the quotient $\SOs{2n}/\U n$ is a classical non-compact Riemannian symmetric space.

The Lie algebra $\sos{2n}$ has the orthogonal Cartan decomposition
$$\sos{2n} = \u n\oplus \p,$$
and the orthonormal basis $\B_{\u n}$ for $\u n$ is given by
\begin{eqnarray*}
    \tfrac{1}{\sqrt2}\left(\begin{array}{cc}
        Y_{rs} & 0 \\
        0 & Y_{rs}
    \end{array}\right),\ \
    \tfrac{1}{\sqrt2}\left(\begin{array}{cc}
        0 & X_{rs} \\
        -X_{rs} & 0
    \end{array}\right),\ \
    \tfrac{1}{\sqrt2}\left(\begin{array}{cc}
        0 & D_t \\
        -D_t & 0
    \end{array}\right)
\end{eqnarray*}
and the orthonormal basis $\B_{\p}$ for the orthogonal subspace $\p$ by
\begin{eqnarray*}
    \tfrac{i}{\sqrt2}\left(\begin{array}{cc}
        Y_{rs} & 0 \\
        0 & -Y_{rs}
    \end{array}\right),\ \
    \tfrac{i}{\sqrt2}\left(\begin{array}{cc}
        0 & Y_{rs} \\
        Y_{rs} & 0
    \end{array}\right),
\end{eqnarray*}
where $1\leq r<s\leq n$ and $1\leq t \leq n$.

\begin{proposition}\label{proposition-eigenfunctions-SOs}
Let $a,b\in \C^{2n}$ be linearly independent vectors satisfying
$$
(a,a)\cdot(b,b) - (a,b) = 0,
$$
where $(\cdot,\cdot)$ is the standard complex bilinear form on $\C^{2n}$. Then the complex-valued map $\phi:\SOs{2n}/\U n\to\C$ given by
$$
\phi(z\cdot\U n) = \trace (ab^tzJ_nz^t)
$$
is an eigenfunction with
$$
\lambda = 2(n-1),\quad \mu=1.
$$
\end{proposition}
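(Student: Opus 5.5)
The plan is to follow the pattern of Propositions \ref{proposition-eigenfamily_slr} and \ref{proposition-eigenfunctions-spr}. I would quote the corresponding eigenfunction result on the compact dual $\SO{2n}/\U n$ from \cite{Gud-Sif-Sob-2} and then transfer it by the duality of Theorem \ref{theorem-duality} and Corollary \ref{corollary-dual-eigenfamilies}. By Table \ref{table-eigenfamilies}, the compact dual carries eigenfunctions with $(\lambda,\mu)=(-2(n-1),-1)$. These are of the form $\hat\phi^*(z)=\trace(ab^tzJ_nz^t)$ on $\SO{2n}$, under a quadratic condition on $a,b$. I would check that this condition is exactly the one in the statement. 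I read the hypothesis as $(a,a)(b,b)-(a,b)^2=0$, and if the exponent is genuinely missing in the source I would note the correction. Duality then yields $(\lambda,\mu)=(2(n-1),1)$ on $\SOs{2n}/\U n$, as claimed.

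Next I would verify the duality set-up. Both $\SO{2n}$ and $\SOs{2n}$ sit inside the common complexification $\SOC{2n}$, and $\U n$ is the common maximal compact subgroup. The function $\phi^\C(z)=\trace(ab^tzJ_nz^t)$ is holomorphic on all of $\SOC{2n}$. Its restrictions to the two real forms are therefore dual in the sense of Section \ref{section-duality}, with $W^\C=\SOC{2n}$.

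Well-definedness on the quotient comes next. For $k\in\U n$ embedded as $\bigl(\begin{smallmatrix} x & y\\ -y & x\end{smallmatrix}\bigr)$ we have $kJ_nk^t=J_n$, since $\U n\subset\SpR n$. Hence $zkJ_n(zk)^t=zJ_nz^t$, so $\phi$ descends to $\SOs{2n}/\U n$. The projection is a Riemannian submersion with totally geodesic fibres, so Corollary \ref{corollary-lift-eigen} lets us work with the lift on the group.

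I expect the main obstacle to be matching conventions: the sign of $J_n$, the choice of Riemannian metric, and the exact form of the condition on $a,b$ in \cite{Gud-Sif-Sob-2}. If the reference does not apply directly, I would fall back on a direct computation with the basis $\B_{\sos{2n}}$ listed above. I would compute
\[
X(\hat\phi)=2\,\trace(J_nz^tab^tzX)\quad\text{(symmetrised in $a,b$ as in Lemma \ref{lemma-regular-orthogonal})}
\]
and
\[
X^2(\hat\phi)=2\,\trace(ab^tzX J_nX^tz^t)+2\,\trace(ab^tzX^2J_nz^t).
\]
Summing over the basis, the identities $\sum X J_nX^t$ and $\sum X^2$ reduce to scalar multiples of $J_n$. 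Together with $zJ_nz^t$ being skew, this gives $\tau(\hat\phi)=2(n-1)\hat\phi$. For $\kappa$, the sum $\sum_X \trace(AX)\trace(BX)$ is expressed through the projection onto the complexification $\soC{2n}$. It produces a term proportional to $\hat\phi^2$, plus an extra term proportional to $(a,a)(b,b)-(a,b)^2$ times a function of $z$. The hypothesis kills the extra term, leaving $\kappa(\hat\phi,\hat\phi)=\hat\phi^2$.
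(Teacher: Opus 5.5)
Your proposal is correct and follows the paper's route. The paper quotes the compact-dual eigenfunction on $\SO{2n}/\U n$ from Proposition 1.1 of \cite{Geg-Gud-1}, where it is stated in the form $\trace(\tfrac12(ab^t-ba^t)xJ_nx^t)$. It transfers this by Corollary \ref{corollary-dual-eigenfamilies} and then uses the skew-symmetry of $zJ_nz^t$ to rewrite it as $\trace(ab^tzJ_nz^t)$; you should include that one-line step if you cite the skew-symmetrised form.

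Your reading of the hypothesis as $(a,a)(b,b)-(a,b)^2=0$ is the right one. The printed condition is not invariant under rescaling $a$, whereas the eigenfunction property is.
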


\begin{proof}
According to Proposition 1.1 in \cite{Geg-Gud-1}, if $a,b\in \C^{2n}$ are as in the statement, then the map $\phi^*:\SO{2n}/\U n \to \C$ given by
$$
\phi^*(x\cdot\U n) = \trace (\tfrac{1}{2}(ab^t-ba^t)xJ_nx^t)
$$
is an eigenfunction with
$$
\lambda = -2(n-1),\quad \mu = -1.
$$
There the authors are using the representation $\so{2n} = \u n\oplus i\,\p$ which is immediately dual to ours, and thus by Corollary \ref{corollary-dual-eigenfamilies} we see that the dual map
$$
\phi(z\cdot\U n) = \trace (\tfrac{1}{2}(ab^t-ba^t)zJ_nz^t)
$$
is an eigenfunction with 
$$
\lambda = 2(n-1),\quad \mu = 1.
$$
To finalise the argument, we show that this is the same as the map defined in the statement. Using the skew-symmetry of $J_n$ we have
\begin{eqnarray*}
\phi(z\cdot\U n) 
&=& \trace (\tfrac{1}{2}(ab^t-ba^t)zJ_n z^t)\\
&=& \tfrac{1}{2}(b^t(zJ_nz^t)a - a^t(zJ_nz^t)b) \\
&=& \tfrac{1}{2}b^t(zJ_nz^t - zJ_n^tz^t)a\\
&=& b^tzJ_nz^ta\\
&=& \trace (ab^tzJ_nz^t).
\end{eqnarray*}
This proves the statement.
\end{proof}

\begin{proof}{(Theorem \ref{theorem-SOsU})}
First, note that $\phi$ fulfills the conditions of Proposition \ref{proposition-eigenfunctions-SOs} and is thus an eigenfunction on $\SOs{2n}/\U n$. Let $\hat\phi:\SOs{2n}\to\C$ be the $\U n$-invariant lift of $\phi$ to the Lie group level. We will show that $\hat\phi$ has zero as a regular value, from which the same will follow for $\phi$ thanks to the $\U n$-invariance.

Observe also that $\hat\phi$ satisfies the conditions of Lemma \ref{lemma-regular-orthogonal} with $B = J_n$. Thus, we have $\hat\phi(z) = 0$ if and only if 
$$(J_nz^ta, z^tb) = 0.
$$
Let $z = I_{2n}$ be the identity matrix, then by assumption we have $(J_na, b) = 0$ and thus $\hat\phi(I_{2n}) = 0$. In particular the fiber $\phi^{-1}(\{0\})$ is non-empty.
\smallskip

Now suppose that $d\phi(z) = 0$ for some point $z\in\phi^{-1}(\{0\})$. The Lie algebra $\sos{2n}$ has the complexification $\soC{2n}$, which consists of the complex skew-symmetric matrices. Thus, by Lemma \ref{lemma-regular-orthogonal}, the matrix
$$
J_nz^tab^tz \in \soC n^\perp
$$
has to be symmetric. Being a symmetric matrix of rank $1$ means that it has the form $J_nz^tab^tz = ww^t$ for some $w\in\C^{2n}$. From this it follows that
\begin{eqnarray*}
\span_\C \{z^t b\} 
&=& \text{Im}(J_nz^tab^tz)^t\\
&=& \text{Im}(ww^t)^t \\
&=& \text{Im}(ww^t)\\
&=& \text{Im}(J_nz^tab^tz)\\
&=& \span_\C \{J_nz^ta\}
\end{eqnarray*}
and thus $J_nz^ta$ and $z^tb$ are linearly dependent. Given that $z^tb \neq 0$ there is a non-zero scalar $\lambda \in\C$ such that $J_nz^ta = \lambda z^tb$. Then, noting that the matrices $J_n, z^t \in\SOs{2n} \subset \SOC{2n}$ preserve the bilinear form $(\cdot, \cdot)$, we have
$$
\lambda^2(b,b) = (\lambda z^tb, \lambda z^tb) = (J_nz^ta, J_nz^ta) = (a,a) = 0
$$
and thus $(b,b) = 0$. This gives us a contradiction.
\end{proof}

\begin{example}
Let $n = 3$ and consider the six dimensional Riemannian symmetric space $\SOs 6/\U 3$. Using Theorem \ref{theorem-SOsU} we can construct a complete four dimensional minimal submanifold thereof.

We note that the vectors $a = e_1 + ie_2, b = e_6 \in \C^{6}$ fulfill the conditions of the theorem. The corresponding eigenfunction $\hat\phi:\SOs 6 \to \C$ defined by
$$
\hat\phi(z) = \trace (ab^tzJ_nz^t)
$$
satisfies $\hat\phi(z) = 0$ if and only if
$$
(J_3z^ta, z^tb) = -(\bar z^t(e_4+ie_5), z^te_6) = -\langle z_4, z_6\rangle + i\langle z_5, z_6\rangle = 0,
$$
which gives $\langle z_4, z_6\rangle = \langle z_5, z_6\rangle = 0$ where $\langle\cdot,\cdot\rangle$ is the standard Euclidean inner product on $\C^{6}$ and $z_j$ is the $j$-th row of the matrix $z\in\SOs 6$. Thus, we obtain the complete four dimensional minimal submanifold
$$
\{z\cdot\U 3 \, |\, z\in\SOs 6,\ \langle z_4, z_6\rangle = \langle z_5, z_6\rangle = 0\}
$$
of $\SOs 6/\U 3$.
\end{example}

%%%%%%%%%%%%%%%%%%%%%%%%%%%%%%%%%%%%%%%%%%%
\section{The Symmetric Space $\SUs{2n}/\Sp n$}
\label{section-SU2n-Spn}
%%%%%%%%%%%%%%%%%%%%%%%%%%%%%%%%%%%%%%%%%%%

In this last section we provide a proof of Theorem \ref{theorem-SUsSp}. Consequently, we obtain a new multidimensional family of complete minimal submanifolds of the non-compact Riemannian symmetric space $\SUs{2n}/\Sp n$.
\medskip 

The group $\SUs{2n}$ can be defined as $\SUs{2n} = \Us{2n}\cap\SLC {2n}$,
where
$$
\Us{2n} = \left\{\left(\begin{array}{cc}
    z & w \\
    -\bar w & \bar z
\end{array}\right)\, \Bigg| \, z,w\in\GLC n \right\}
$$
is the quaternionic general linear group. This is a generalisation of the standard two-dimensional complex  representation $\hn\to\C^{2\times 2}$
$$
q=z + jw \mapsto \left(\begin{array}{cc}
    z & w \\
    -\bar w & \bar z
\end{array}\right)
$$
of the quaternions $\hn$ on $\cn^2$.  Although there is no well-defined notion of a determinant for quaternionic matrices, this identification allows us to define $\SUs{2n}$ in terms of the usual complex determinant. Observe that we can also define $\SUs{2n}$ as
$$
\SUs{2n} = \{z\in \SLC {2n}\, |\, zJ_n = J_n\bar z\}.
$$

The quaternionic unitary group $\Sp n$ is the maximal compact subgroup of $\SUs{2n}$. It is the intersection of the unitary group $\U{2n}$ and the standard representation of the quaternionic general linear group $\Us{2n}$ in $\cn^{2n\times 2n}$ given by
\begin{equation*}\label{equation-Spn}
q=(z+jw)\mapsto 
\left(\begin{array}{cc}
    z & w \\
    -\bar w & \bar z
\end{array}\right)
\end{equation*}
The Lie algebra $\sp n$ of $\Sp n$ satisfies
$$
\sp{n}=\left\{\left(\begin{array}{cc}
    Z & W \\
    -\bar W & \bar Z
\end{array}\right)\in\cn^{2n\times 2n}
\, \Big|\, Z^*+Z=0,\ W^t-W=0\right\}.
$$ 

For the Lie algebra of $\Us{2n}$ we have the orthogonal Cartan decomposition 
$$\us{2n} = \sp n \oplus \p,$$
where an orthonormal basis $\B_{\sp n}$ for $\sp n$ is given by
\begin{eqnarray*}
    \tfrac{1}{\sqrt2}\left(\begin{array}{cc}
        X_{rs} & 0 \\
        0 & X_{rs}
    \end{array}\right),\ \ &\tfrac{1}{\sqrt{2}}\left(\begin{array}{cc}
        0 & X_{rs} \\
        -X_{rs} & 0
    \end{array}\right),\\
    \tfrac{1}{\sqrt2}\left(\begin{array}{cc}
        D_t & 0 \\
        0 & D_t
    \end{array}\right),\ \ &\tfrac{1}{\sqrt2}\left(\begin{array}{cc}
        0 & D_t \\
        -D_t & 0
    \end{array}\right),\\
    \tfrac{1}{\sqrt2}\left(\begin{array}{cc}
        Y_{rs} & 0 \\
        0 & Y_{rs}
    \end{array}\right),\quad &\tfrac{1}{\sqrt2}\left(\begin{array}{cc}
        0 & Y_{rs} \\
        -Y_{rs} & 0
    \end{array}\right),\\
    \tfrac{i}{\sqrt2}\left(\begin{array}{cc}
        Y_{rs} & 0 \\
        0 & -Y_{rs}
    \end{array}\right),\ \ &\frac{i}{\sqrt2}\left(\begin{array}{cc}
        0 & Y_{rs} \\
        Y_{rs} & 0
    \end{array}\right)
\end{eqnarray*}
and the orthonormal basis $\B_{\p}$ for the orthogonal subspace $\p$ by
\begin{eqnarray*}
\tfrac{i}{\sqrt2}\left(\begin{array}{cc}
        X_{rs} & 0 \\
        0 & -X_{rs}
    \end{array}\right),\ \ &\tfrac{i}{\sqrt2}\left(\begin{array}{cc}
        0 & X_{rs} \\
        X_{rs} & 0
    \end{array}\right),\\
    \tfrac{i}{\sqrt2}\left(\begin{array}{cc}
        D_t & 0 \\
        0 & -D_t
    \end{array}\right),\ \ &\tfrac{i}{\sqrt2}\left(\begin{array}{cc}
        0 & D_t \\
        D_t & 0
    \end{array}\right),
\end{eqnarray*}
where $1\leq r < s \leq n$ and $1\leq t \leq n$.
We then obtain the subalgebra $\sus{2n}$ of $\us{2n}$ by removing the effect of real scalar multiplication i.e. as the orthogonal complement of the real span of the identity matrix
$$\us{2n} = \span_\R\{I_{2n}\}\oplus \sus{2n}.$$

\begin{proposition}\label{proposition-eigenfunctions-SUs}
Let $a,b\in\C^{2n}$ be linearly independent. Then the\\ complex-valued map $\phi:\SUs{2n}/\Sp n\to\C$ given by
$$
\phi(z\cdot\Sp n) = \trace (ab^tzJ_nz^t)
$$
is an eigenfunction with
$$
\lambda = 2\cdot\tfrac{(n^2-n-1)}{n},\quad \mu=2\cdot\tfrac{(n-1)}{n}.
$$
\end{proposition}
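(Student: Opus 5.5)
The plan is to proceed exactly as in the proof of Proposition \ref{proposition-eigenfunctions-SOs}: obtain the eigenfunction on the compact dual $\SU{2n}/\Sp n$ from the literature, then transfer it via duality. The dual pair is $\SUs{2n}/\Sp n$ and $\SU{2n}/\Sp n$, since $\sus{2n}\oplus$ conventions give $\su{2n}$-type compact form $\sp n\oplus i\,\p$.

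\emph{Compact side.} From Table \ref{table-eigenfamilies} and \cite{Gud-Sif-Sob-2} (or the corresponding statement in \cite{Geg-Gud-1}), for vectors $a,b\in\C^{2n}$ the function
$$
\phi^*(x\cdot\Sp n)=\trace\bigl(\tfrac12(ab^t-ba^t)\,xJ_nx^t\bigr)
$$
on $\SU{2n}/\Sp n$ is a $(\lambda^*,\mu^*)$-eigenfunction with $\lambda^*=-\frac{2(2n^2-n-1)}{n}$ and $\mu^*=-\frac{2(n-1)}{n}$. For $\SU{2n}/\Sp n$, unlike $\SO{2n}/\U n$, I expect no isotropy condition on $a,b$ to be needed; only the skew-symmetric part of $ab^t$ matters, and linear independence guarantees that this part is nonzero. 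Well-definedness comes from $kJ_nk^t=J_n$ for $k\in\Sp n$. If the cited result is stated only for particular skew matrices $A$, such as $A=E_{jk}-E_{kj}$ or general skew $A$ of rank two, then I will note that $\tfrac12(ab^t-ba^t)$ is a general rank-two skew matrix. The eigen-property for general such $A$ then follows by invariance: conjugation $x\mapsto gx$ by $g\in\SU{2n}$ acts by isometries, and $\SLC{2n}$ acts transitively on decomposable $2$-forms. This transfers by analytic continuation in $g$, since $\tau$ and $\kappa$ are computed algebraically on the complexified level.

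\emph{Duality.} The function $z\mapsto\trace(AzJ_nz^t)$ is the restriction of a holomorphic polynomial on $\SLC{2n}$, the common complexification. So $\phi$ and $\phi^*$ are dual in the sense of Section \ref{section-duality}. By Corollary \ref{corollary-dual-eigenfamilies}, $\phi$ is then an eigenfunction on $\SUs{2n}/\Sp n$ with $\lambda=-\lambda^*$ and $\mu=-\mu^*$. Lifting and descending through the Riemannian submersions is handled by Corollary \ref{corollary-lift-eigen}.

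\emph{Identification.} Finally I will repeat the computation from the proof of Proposition \ref{proposition-eigenfunctions-SOs}. Since $zJ_nz^t$ is skew-symmetric, $\trace(\tfrac12(ab^t-ba^t)zJ_nz^t)=b^tzJ_nz^ta=\trace(ab^tzJ_nz^t)$.

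\emph{Main obstacle.} The hard part is the bookkeeping of constants. The proposition's $\lambda=2(n^2-n-1)/n$ does not literally equal the table's $2(2n^2-n-1)/n$, so I must check the normalisation of the metric on $\sus{2n}$ and the precise source value. If necessary, I will verify directly by computing $\sum_{Z}Z^2$ of $\trace(AxJ_nx^t)$ over the basis $\B_{\p}$ together with the trace-removed part, using $\sum_Z ZJ_nZ^t$-type identities, and adjust the constant accordingly.
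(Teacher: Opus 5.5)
Your plan is the paper's proof. You quote Proposition 5.2 of \cite{Gud-Sif-Sob-2} for $\phi^*(x\cdot\Sp n)=\trace(\tfrac12(ab^t-ba^t)xJ_nx^t)$ on $\SU{2n}/\Sp n$, pass to $\SUs{2n}/\Sp n$ via Corollary \ref{corollary-dual-eigenfamilies}, and identify the two expressions through the skew-symmetry of $zJ_nz^t$ as in Proposition \ref{proposition-eigenfunctions-SOs}.

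The discrepancy you flag is a misprint in the statement, not a normalisation issue. The paper's proof even quotes $-2(n^2-n-1)/n$ on the compact side, contradicting its own table. Rescaling the metric multiplies $\lambda$ and $\mu$ by the same factor. The table gives $\lambda/\mu=(2n^2-n-1)/(n-1)=2n+1$, while the stated values do not. A direct check on $\SU{N}$, $N=2n$, confirms the table. Use $\sum_Z Z^2=-NI_N$ and $\sum_Z ZMZ^t=-M^t$ over the orthonormal basis $\{Y_{rs},iX_{rs},iD_t\}$ of $\u N$, then discard the direction $iI_N/\sqrt N$; this gives $\lambda^*=-(2N-2)+4/N=-2(2n^2-n-1)/n$. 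So your argument proves the proposition with $\lambda=2(2n^2-n-1)/n$ and $\mu=2(n-1)/n$.
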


\begin{proof}
According to Proposition 5.2 in \cite{Gud-Sif-Sob-2} the dual map\\ $\phi^*:\SU{2n}/\Sp n \to \C$ given by
$$
\phi^*(z\cdot\Sp n ) = \trace (\tfrac{1}{2}(ab^t-ba^t)zJ_nz^t)
$$
is an eigenfunction  with
$$
\lambda = -2\cdot\tfrac{(n^2-n-1)}{n}\quad\text{and}\quad \mu = -2\cdot\tfrac{(n-1)}{n}.
$$
The remaining argument is similar to that in the proof of Proposition \ref{proposition-eigenfunctions-SOs}.
\end{proof}

We can now provide a proof of Theorem \ref{theorem-SUsSp}.

\begin{proof}{(Theorem \ref{theorem-SUsSp})}
Let $\hat\phi:\SUs{2n} \to \C$ as usual be the $\Sp n$-invariant lift of $\phi$ to the group level. Observing that $\hat\phi$ satisfies the conditions for Lemma \ref{lemma-regular-orthogonal} with $B = J_n$, we see that $\hat\phi(z) = 0$ if and only if
$$
0 = (J_nz^t a, z^tb) = (\bar z^t J_na, z^tb) = (\overline{z^t (J_n \bar a)}, z^tb) = \langle z^t (J_n \bar a), z^tb\rangle,
$$
where $\langle\cdot,\cdot\rangle$ is the standard Euclidean inner product on $\C^{2n}$. By assumption, we thus have $I_{2n} \in \phi\inv(\{0\})$.
\smallskip

Let us now assume that $z\in\phi^{-1}(\{0\})$ is such that $d\phi(z) = 0$. The complexification of $\sus{2n}$ is $\slc {2n}$, whose orthogonal complement in $\glc {2n}$ is the complex 1-dimensional subspace spanned by $I_{2n}$. Then by Lemma \ref{lemma-regular-orthogonal} we must have
$$
J_nz^tab^tz = \alpha I_{2n}
$$
for some $\alpha \in \C\setminus\{0\}$. However, the former matrix is of rank $1$ whereas the latter is invertible, so we have a contradiction. The result now follows from Theorem \ref{theorem-Gud-Mun-1}.
\end{proof}

\begin{example}
Let $n=2$ and consider the 6-dimensional Riemannian symmetric space $\SUs 4/\Sp 2$. We will use Theorem \ref{theorem-SUsSp} in order to construct an explicit complete four dimensional minimal submanifold of this space.
    
Take the basis vectors $a = e_1$ and $b = e_2$ in $\C^4$. Note that $J_n\bar a = e_3$ is orthogonal to $b$ and thus the conditions of Theorem \ref{theorem-SUsSp} are satisfied. The eigenfunction $\hat\phi:\SUs 4 \to \C$ defined by
$$
\hat\phi(z) = \trace (ab^tzJ_nz^t)
$$
satisfies $\hat\phi(z) = 0$ if and only if
$$
(J_2z^te_1, z^te_2) = -(\bar z^te_3, z^te_1) = \langle z_3, z_1\rangle = 0,
$$
where $z_j$ denotes the $j$-th row of the matrix $z\in\SUs{2n}$. The induced eigenfunction $\phi:\SUs 4/\Sp 2 \to \C$ then gives rise to the complete four dimensional  minimal submanifold
$$
\{z\cdot \Sp 2 \, |\, z\in\SUs 4,\ \langle z_1, z_3\rangle = 0\}
$$
of $\SUs 4/\Sp 2$.
\end{example}

%%%%%%%%%%%%%%%%%%%%%%%%%%%%%%%%%%%%%%%%%%%%%%%%%%%%%%%%
\section{Acknowledgements}
%%%%%%%%%%%%%%%%%%%%%%%%%%%%%%%%%%%%%%%%%%%%%%%%%%%%%%%%

The authors are grateful to Thomas Jack Munn for useful discussions on this work.

%%%%%%%%%%%%%%%%%%%%%%%%%%%%%%%%%%%%%%%%%%%

\end{document}